\newtheorem{theorem}{Theorem}
\newtheorem{lemma}{Lemma}
\numberwithin{equation}{section}
\numberwithin{theorem}{section}
\numberwithin{corollary}{section}
\numberwithin{remark}{section}
\numberwithin{lemma}{section}
\begin{document}

\address{Institute of  Mathematics of NAS of Ukraine \\
3 Tereshchenkivs'ka Str., Kyiv-4, 01601, Ukraine}
\author{Volodymyr L. Makarov}
\email[V.L. Makarov]{makarov@imath.kiev.ua}
\author{Denys V. Dragunov}
\email[D.V. Dragunov]{dragunovdenis@gmail.com}
\author{Dmytro A. Sember}
\email[D.A. Sember]{semberdmitry@gmail.com}

\title{The FD-method for solving nonlinear Klein-Gordon equation}

\subjclass[2010]{Primary: 65M99 ; Secondary: 35L15}
\keywords{Klein-Gordon equation; Sine-Gordon equation; Adomian polynomial; Goursat problem; Bessel function; confluent hypergeometric function; superexponentially convergent method; generating function}

\begin{abstract}
  In the paper we present a functional-discrete method for solving the Goursat problem for nonlinear Klein-Gordon equation. The sufficient conditions providing that the proposed method converges superexponentially are obtained. The results of numerical example presented in the paper are in good agreement with the theoretical conclusions.
\end{abstract}
\maketitle

\maketitle
\section{Introduction}
  It is well known that the Klein-Gordon equation (KGE)
  \begin{equation}\label{KGE}
    \frac{\partial^{2}v(\xi,t)}{\partial t^{2}}-\frac{\partial^{2}v(\xi,t)}{\partial \xi^{2}}+\mathbb{N}(v(\xi,t))=\Phi(\xi,t)
  \end{equation}
  has extensive applications in modern physics and engineering. Particulary, it arises when studying the scalar massive field in the de Sitter and anti-de Sitter spacetime \cite{KGe_anti_de_Sitter, KGe_de_Sitter}, the propagation of intense ultra-short optical pulses in low density dielectrics \cite{N_soliton_solutions_Gibbon}, the pionic atoms \cite{Pionic_atom} et al. Furthermore, a partial case of the nonlinear KGE --- the Sine-Gordon equation (SGE) --- alone has a great number of applications in physics. One encounters the SGE when studying the propagation of a ``slip'' in an infinite chain of elastically bound atoms lying over a fixed lower chain of similar atoms \cite{Frenkel_Kontorova}, the magnetic flux propagation in a large Josephson junction, the domain wall dynamics in magnetic crystals \cite{SineGordonEq_and_its_appl} et al. A nonlinear theory for strong interactions has also been developed in which the SGE appears as a simplified classical model \cite{unified_fild_equation_1, unified_fild_equation_2}. In geometry, the Goursat and Cauchy problems for the SGE are related to the existence of special nets on surfaces in $E^{3}$,which are called Chebyshev nets \cite{Sine_Gordon_2}.

  The whole range of the methods for solving the KGE can be conditionally divided into the two groups: {\it the analytical methods} and {\it the discrete methods}. The  { analytical methods} allow us to express the exact solution to the equation through the elementary functions and convergent functional series. This methods a very useful for studying nonlinear physical phenomena, such as traveling waves and solitons \cite{Drazin_Solitons}.   Among the  analytical methods there are the {\it polynomial approximation method} \cite{Podlipenko_Goursat, Makhmudov_Goursat}, the {\it extended tanh method} (see \cite{PDE_tanh_method}), the {\it sine-cosine method} (see \cite{PDE_sine_cosine_method}), the {\it variational iteration method} (see \cite{variational_iteration_KGE, variational_iteration_SGE}), the {\it homotopy methods} (see \cite{He_KGE, Approx_anal_sol_KGE_homotopy, ADM_Goursat} and references therein), the {\it infinite series methods} and many other methods and techniques (see \cite{Exact_tr_wave_sol_KGE, direct_method_SGE,Taylor_series_KGE} and references therein). However, when the qualitative analysis of a solution is not the main target, then discrete methods can be useful as well. The methods of this group approximate the exact solution on a finite set of distinct points (i.e., on the mesh). The discrete methods for solving KGE take their origins mainly from the finite-difference methods (see \cite{Berikelashvili_KGE, Finite-difference_Goursat, Duncan_finite-difference_KGE} and references therein) and Runge-Kutta methods (see \cite{Moore_R_K_Goursat, Num_Sol_Goursat_problem} and references therein).

  Apparently, the border line between the two groups of methods introduced above is very fuzzy and some methods can be considered as belonging to both groups at once.  In the present paper we offer a numerical-analytical method which  possesses the mein properties of both analytical and discrete methods simultaneously. This method (hereinaftr referenced to as the {\it FD-method}) is based on the FD-approach described in \cite{Sytnyk_1, Dragunov_1} and takes its origins from the functional-discrete method for solving Sturm-Liouville problems (see \cite{makarov1, mrrew}).

  The paper is organized as follows. The Goursat problem for nonlinear KGE is introduced in Section \ref{Sect_problem statement}. Section \ref{Sect_descr_FD-method} is devoted to the description of the FD-method's algorithm for the given Goursat problem. In Section \ref{Sect_Bas_proble} an important auxiliary statement about approximating properties of a hyperbolic differential equation with piece-wise constant argument is proved (see Theorem \ref{theorem_main}).  Theorem \ref{theorem_main} plays a key role in the proof of Theorem \ref{my_theorem} containing sufficient convergence conditions for the proposed FD-method (Section \ref{Sect_Conv_result}). A numerical example and conclusions are presented in Sections \ref{Sect_num_examp} and \ref{Sect_Concl} respectively.

\section{Problem statement}\label{Sect_problem statement}
  Let us consider KGE \eqref{KGE} in a slightly modified form
\begin{equation}\label{Gur_pr_eq_1}
    \frac{\partial^{2} u(x,y)}{\partial x\partial y}+\mathbb{N}(u(x,y))=f(x,y),
\end{equation}
  which is more suitable for application of the FD-method. Equation \eqref{Gur_pr_eq_1} can be obtained from \eqref{KGE} via the transform of variables
  \begin{equation}\label{KGE_subs}
    t=x-y, \quad \xi=x+y,
  \end{equation}
  assuming that
   $$u(x,y)=v(x-y,x+y),\quad f(x,y)=\Phi(x-y, x+y).$$

  Since the  FD-method is a numerical-analytical method (not mere analytical) it cannot be applied to equation \eqref{Gur_pr_eq_1} without an initial or boundary condition. In the paper we confine ourself to considering a Goursat problem (see \cite{Goursat_analysis_v_3_p_1}), supplementing equation \eqref{Gur_pr_eq_1} with the following boundary conditions:
\begin{equation}\label{Gur_pr_eq_2}
    u(x,0)=\psi(x),\quad u(0,y)=\phi(y),\quad \psi(0)=\phi(0).
\end{equation}
We assume that nonlinear function $\mathbb{N}(u)$ can be expressed in the form of
 $$\mathbb{N}(u)=N(u)u,\quad N(u)=\sum\limits_{s=0}^{\infty}\nu_{s}u^{s},\; \nu_{s}\in \mathbf{R},\;\forall u\in \mathbf{R}$$
and $$\psi(x)\in C^{(1)}\left(D_{1}\right)\cap C\left(\bar{D}_{1}\right), \phi(y)\in C^{(1)}\left(D_{2}\right)\cap C\left(\bar{D}_{2}\right), \quad f(x,y)\in C(\bar{D}),\footnote{Hereinafter a horizontal bar above a letter indicates the closure of the set denoted by the letter.}$$ $$D=\left\{(x,y):0<x< X, 0<y< Y\right\},\quad D_{1}=\left(0; X\right),\; D_{2}=\left(0; Y\right).$$
Given assumptions imply that the solution $u(x,y)\in C^{1,1}(D)\cap C(\bar{D})$ to the Goursat problem \eqref{Gur_pr_eq_1}, \eqref{Gur_pr_eq_2} exists and is unique (see \cite{Gursat_Kungurtsev}).

\section{General description of the FD-method's algorithm for solving KGE}\label{Sect_descr_FD-method}
According to the FD-method's algorithm for solving operator equations described in \cite{Sytnyk_1}, the FD-method for solving Goursat problem \eqref{Gur_pr_eq_1}, \eqref{Gur_pr_eq_2} can be constructed in the following way.

  We approximate the exact solution $u(x,y)$ to problem \eqref{Gur_pr_eq_1}, \eqref{Gur_pr_eq_2} by the function $\stackrel{m}{u}\!\!(x,y)$ defined as the finite sum
\begin{equation}\label{FD-approx}
  \stackrel{m}{u}\!\!(x,y)=\sum\limits_{k=0}^{m}\stackrel{(k)}{u}\!\!(x,y),
\end{equation}
where $m\in \mathbf{N}.$
In the rest part of the paper the function $\stackrel{m}{u}\!\!(x,y)$ can be also referenced to as the FD-approximation of rank $m.$

To define the functions $\stackrel{(k)}{u}\!\!\!(x,y)$  we have to introduce a mesh
\begin{equation}\label{mesh}
  x_{i}=h_{1}i,\quad y_{j}=h_{2}j,\quad h_{1}=\frac{X}{N_{1}},\quad h_{2}=\frac{Y}{N_{2}},
\end{equation}
$$ i\in \overline{0, N_{1}}, \quad j\in \overline{0, N_{2}},\quad N_{1}, N_{2}\geq 1.$$ For a while we assume that the positive integers $N_{1}$ and $N_{2}$ are chosen arbitrary, however, later it will be shown that decreasing the value of parameter $h=\sqrt{h_{1}^2+h_{2}^2}$ (that is, increasing both $N_{1}$ and $N_{2}$) we can increase the accuracy of the FD-method.

As soon as mesh \eqref{mesh} is fixed we can define function
$\stackrel{(0)}{u}\!\!\!(x,y)\in C(\bar{D})$ as the solution to the nonlinear Goursat problem with piece-wise constant argument (hereinafter referenced to as the {\it basic problem})
\begin{equation}\label{lem_2_eq_1}
    \frac{\partial^{2} \stackrel{(0)}{u}\!\!\!(x,y)}{\partial x\partial y}+N\bigl(\stackrel{(0)}{u}\!\!(x_{i-1},y_{j-1})\bigr)\stackrel{(0)}{u}\!\!(x,y)=f(x,y),\quad \forall (x,y)\in \bar{P}_{i,j},
\end{equation}
\begin{equation}\label{lem_2_eq_2}
    \stackrel{(0)}{u}\!\!(x,0)=\psi(x),\quad \stackrel{(0)}{u}\!\!(0,y)=\phi(y),\quad \psi(0)=\phi(0),\quad\forall  (x,y),\in \bar{D},
\end{equation}
where
\begin{equation}\label{Gur_pr_eq_16}
    P_{i,j}=\left(x_{i-1}, x_{i}\right)\times\left(y_{j-1}, y_{j}\right),\quad i\in \overline{1, N_{1}},\;j\in \overline{1, N_{2}}.
\end{equation}

Once the basic problem \eqref{lem_2_eq_1}, \eqref{lem_2_eq_2} is solved, the functions $\stackrel{(k)}{u}\!\!\!(x,y)\in C(\bar{D}),$ $k\in \overline{1,m}$ can be found as the solutions to the following sequence of linear Goursat problems
\begin{equation}\label{corrections_equation}
  \frac{\partial^{2} \stackrel{(k)}{u}\!\!\!(x,y)}{\partial x\partial y}+N(\stackrel{(0)}{u}\!\!\!(x_{i-1},y_{j-1}))\stackrel{(k)}{u}\!\!\!(x,y)+N^{\prime}\bigl(\stackrel{(0)}{u}\!\!\!(x,y)\bigr)\stackrel{(0)}{u}\!\!\!(x,y)\stackrel{(k)}{u}\!\!\!(x_{i-1},y_{j-1})=
\end{equation}
$$=-\sum\limits_{s=1}^{k-1}A_{k-s}\bigl(N; \stackrel{(0)}{u}\!\!\!(x_{i-1},y_{j-1}),\ldots \stackrel{(k-s)}{u}\!\!\!(x_{i-1},y_{j-1}) \bigr)\stackrel{(s)}{u}\!\!\!(x,y)-$$
$$-A_{k}\bigl(N; \stackrel{(0)}{u}\!\!\!(x_{i-1},y_{j-1}),\ldots,\stackrel {(k-1)}{u}\!\!\!(x_{i-1},y_{j-1}),0\bigr)\stackrel{(0)}{u}\!\!\!\left(x, y\right)+$$
$$+\sum\limits_{s=0}^{k-1}\left[A_{k-1-s}\bigl(N;\stackrel{(0)}{u}\!\!\!(x_{i-1},y_{j-1}),\ldots, \stackrel{(k-1-s)}{u}\!\!\!(x_{i-1},y_{j-1})\bigr)-\right.$$
$$
\left.-A_{j-1-s}\bigl(N;\stackrel{(0)}{u}\!\!\!\left(x,y\right),\ldots, \stackrel{(k-1-s)}{u}\!\!\!\left(x,y\right)\bigr)\right]\stackrel{(s)}{u}\!\!\!\left(x,y\right)=\stackrel{(k)}{F}\!\!\!(x,y),
$$
$$\forall (x,y)\in \bar{P}_{i,j},\quad \forall i\in\overline{1,N_{1}},\; \forall j\in\overline{1,N_{2}}.$$
\begin{equation}\label{corrections_conditions}
  \stackrel{(k)}{u}\!\!\!(0,y)=\stackrel{(k)}{u}\!\!\!(x,0)=0,\quad \forall x\in \left[0, X\right],\forall y\in\left[0, Y\right].
\end{equation}
Here $A_{n}\bigl(N; v_{0},v_{1},\ldots, v_{n}\bigr)$ denotes the Adomian polynomial of $n$-th order for the function $N(\cdot)$ (see, for example, \cite{seng1}, \cite{seng2}, \cite{Dragunov}), which can be calculated by the formulas
$$A_{n}\bigl(N; v_{0},v_{1},\ldots, v_{n}\bigr)=\frac{1}{n!}\left.\frac{d^{n}}{d\tau^{n}}N\Bigl(\sum\limits_{s=0}^{\infty}v_{s}\tau^{s}\Bigr)\right|_{\tau=0}=$$
\begin{equation}\label{Cherault_formula}
    =\sum\limits_{\substack{\alpha_{1}+\ldots +\alpha_{n}=n\\ \alpha_{1}\geq\ldots\geq\alpha_{n+1}=0\\ \alpha_{i}\in \mathbf{N}\cup\{0\}}}N^{(\alpha_{1})}(v_{0})\frac{v_{1}^{\alpha_{1}-\alpha_{2}}}{(\alpha_{1}-\alpha_{2})!}\ldots\frac{v_{n}^{\alpha_{n}-\alpha_{n+1}}}{(\alpha_{n}-\alpha_{n+1})!}.
\end{equation}

\section{Approximating properties of the basic problem}\label{Sect_Bas_proble}
It is well known (see, for example, \cite{Bitsadze_EofMP}) that problem \eqref{lem_2_eq_1}, \eqref{lem_2_eq_2} possesses a unique solution, which can be represented in the following form:
$$   \stackrel{(0)}{u}\!\!\!(x,y)=R\left(x,y_{j-1},x,y\right)\stackrel{(0)}{u}\!\!\!(x,y_{j-1})+ $$
$$+R(x_{i-1}, y,x,y)\stackrel{(0)}{u}\!\!\!(x_{i-1},y)-R(x_{i-1}, y_{j-1},x,y)\stackrel{(0)}{u}\!\!\!(x_{i-1}, y_{j-1})-$$
\begin{equation}\label{Repr_Riemann_func}
-\int\limits_{x_{i-1}}^{x}\left[\frac{\partial}{\partial \xi}R(\xi, y_{j-1}, x,y)\right]\stackrel{(0)}{u}\!\!\!(\xi, y_{j-1})d\xi-
\end{equation}
$$-\int\limits_{y_{j-1}}^{y}\left[\frac{\partial}{\partial \eta}R(x_{i-1}, \eta, x,y)\right]\stackrel{(0)}{u}\!\!\!(x_{i-1}, \eta)d\eta+$$
$$+\int\limits_{x_{i-1}}^{x}\int\limits_{y_{j-1}}^{y}R(\xi,\eta,x,y)f(\xi,\eta)d\xi d\eta,\quad \forall (x,y)\in \bar{P}_{i,j},$$
where
$$    R(x,y;\xi,\eta)=J_{0}\left(\sqrt{4N_{i,j}(x-\xi)(y-\eta)}\right)={}_{0}F_{1}\left(1;-(x-\xi)(y-\eta)N_{i,j}\right),
$$
\begin{equation}\label{Reimann_func_description}
\frac{\partial}{\partial x}R(x,y;\xi,\eta)={}_{0}F_{1}\left(2;-(x-\xi)(y-\eta)N_{i,j}\right)N_{i,j}(\eta-y),
\end{equation}
$$\frac{\partial}{\partial y}R(x,y;\xi,\eta)={}_{0}F_{1}\left(2;-(x-\xi)(y-\eta)N_{i,j}\right)N_{i,j}(\xi-x),$$
$$N_{i,j}=\Bigl|N\bigl(\stackrel{(0)}{u}_{i,j}\bigr)\Bigr|, \quad \forall (x,y), (\xi, \eta)\in \bar{P}_{i,j}\quad i\in \overline{1,N_{1}},\; j\in \overline{1,N_{2}},  $$
and $J_{0},$ ${}_{0}F_{1}$ denote the Bessel function of the first kind and the confluent hypergeometric function respectively (see \cite{Kristensson_Sp_func}).
Using integration by parts we can rewrite formula \eqref{Repr_Riemann_func} as follows
\begin{equation}\label{Repr_Riemann_func_impr}
\stackrel{(0)}{u}\!\!\!(x,y)=\stackrel{(0)}{u}\!\!\!(x_{i-1},y)+\int\limits_{x_{i-1}}^{x}R(\xi, y_{j-1}, x,y)\left[\frac{\partial}{\partial \xi}\stackrel{(0)}{u}\!\!\!(\xi, y_{j-1})\right]d\xi-
\end{equation}
$$-\int\limits_{y_{j-1}}^{y}\left[\frac{\partial}{\partial \eta}R(x_{i-1}, \eta, x,y)\right]\stackrel{(0)}{u}\!\!\!(x_{i-1}, \eta)d\eta+$$
$$+\int\limits_{x_{i-1}}^{x}\int\limits_{y_{j-1}}^{y}R(\xi,\eta,x,y)f(\xi,\eta)d\xi d\eta,\quad \forall (x,y)\in \bar{P}_{i,j}.$$

\begin{theorem}\label{theorem_main}
Suppose that $u(x,y)$ and $\stackrel{(0)}{u}\!\!\!(x,y)$ are the solutions to problems \eqref{Gur_pr_eq_1}, \eqref{Gur_pr_eq_2} and   \eqref{lem_2_eq_1}, \eqref{lem_2_eq_2} respectively. Then for the sufficiently small values of $h_{1}$ and $h_{2}$ there exists an independent on $h_{1}$ and $h_{2}$ constant $\kappa,$ such that
\begin{equation}\label{lema_equality}
    \bigl\|u(x,y)-\stackrel{(0)}{u}\!\!\!(x,y)\bigr\|_{\bar{D}}\leq h\kappa,\quad h=\sqrt{h_{1}^{2}+h_{2}^{2}}.
\end{equation}
\end{theorem}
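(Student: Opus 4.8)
The plan is to estimate $z(x,y):=u(x,y)-\stackrel{(0)}{u}\!\!(x,y)$ by converting the difference of the two equations into a single two‑dimensional Volterra integral equation and then applying a Gronwall–Bellman argument. By \eqref{Gur_pr_eq_2} and \eqref{lem_2_eq_2} we have $z\equiv 0$ on $\{x=0\}\cup\{y=0\}$, so in particular $\partial_y z\equiv 0$ on $\{x=0\}$. Subtracting \eqref{lem_2_eq_1} from \eqref{Gur_pr_eq_1} gives, on every cell $P_{i,j}$,
\[
\frac{\partial^{2}z}{\partial x\partial y}=N\bigl(\stackrel{(0)}{u}\!\!(x_{i-1},y_{j-1})\bigr)\stackrel{(0)}{u}\!\!(x,y)-N\bigl(u(x,y)\bigr)u(x,y),
\]
and integrating this in $\eta\in(0,y)$ and then in $\xi\in(0,x)$ (legitimate once the right‑hand side is known to be bounded, see below) yields
\[
z(x,y)=\int_{0}^{x}\!\!\int_{0}^{y}\Bigl[N\bigl(\stackrel{(0)}{u}\!\!(\xi_{i-1},\eta_{j-1})\bigr)\stackrel{(0)}{u}\!\!(\xi,\eta)-N\bigl(u(\xi,\eta)\bigr)u(\xi,\eta)\Bigr]\,d\eta\,d\xi,
\]
where $(\xi_{i-1},\eta_{j-1})$ denotes the lower‑left corner of the cell containing $(\xi,\eta)$.

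Next I would split the integrand using that $N$ is real‑analytic, hence $C^{1}$. Writing $\bar w=\stackrel{(0)}{u}\!\!(\xi,\eta)$, $w_{0}=\stackrel{(0)}{u}\!\!(\xi_{i-1},\eta_{j-1})$ and $N(u)-N(\bar w)=z\int_{0}^{1}N'(\bar w+tz)\,dt=:z\,r(\xi,\eta)$, one checks directly that
\[
N(w_{0})\bar w-N(u)u=\bigl[N(w_{0})-N(\bar w)\bigr]\bar w-\bigl[r(\xi,\eta)\bar w+N(u)\bigr]z(\xi,\eta),
\]
so that $z=\iint_{[0,x]\times[0,y]}e\,d\xi\,d\eta-\iint_{[0,x]\times[0,y]}q\,z\,d\xi\,d\eta$ with the ``consistency defect'' $e:=[N(w_{0})-N(\bar w)]\bar w$ and the coefficient $q:=r\bar w+N(u)$. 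Granting the a priori bounds $\|\stackrel{(0)}{u}\|_{\bar{D}}\le C_{*}$ and $\|\partial_x\stackrel{(0)}{u}\|,\|\partial_y\stackrel{(0)}{u}\|\le C_{*}'$ with $C_{*},C_{*}'$ independent of $h$, and putting $L=\sup_{|\sigma|\le C_{*}}|N'(\sigma)|$, $M=\sup_{|\sigma|\le C_{*}}|N(\sigma)|$, one gets $\|q\|_{\bar{D}}\le C_{*}L+M$ and, for $(\xi,\eta)\in P_{i,j}$,
\[
|e(\xi,\eta)|\le C_{*}L\bigl|\stackrel{(0)}{u}\!\!(\xi,\eta)-\stackrel{(0)}{u}\!\!(\xi_{i-1},\eta_{j-1})\bigr|\le C_{*}L(h_{1}+h_{2})C_{*}'\le\sqrt{2}\,C_{*}LC_{*}'\,h.
\]
Feeding $|z(x,y)|\le\sqrt2\,C_{*}LC_{*}'\,h\,xy+(C_{*}L+M)\iint_{[0,x]\times[0,y]}|z|$ into the two‑dimensional Gronwall–Bellman inequality yields $\|z\|_{\bar{D}}\le\sqrt2\,C_{*}LC_{*}'XY\bigl(\sum_{n\ge0}\frac{((C_{*}L+M)XY)^{n}}{(n!)^{2}}\bigr)h=:\kappa h$, which is \eqref{lema_equality} with $\kappa$ independent of $h_{1},h_{2}$.

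The main obstacle is the uniform‑in‑$h$ a priori bounds on $\stackrel{(0)}{u}$ and its first derivatives. I would derive them from the explicit representation \eqref{Repr_Riemann_func_impr}, using the elementary estimates $|{}_{0}F_{1}(1;-s)|\le1$ and $|{}_{0}F_{1}(2;-s)|\le1$ for $s\ge0$ (equivalently $|J_{0}(t)|\le1$, $|J_{1}(t)|\le t/2$), which give $|R|\le1$ and $|\partial_xR|,|\partial_yR|\le N_{i,j}\max(h_{1},h_{2})$ on $\bar{P}_{i,j}$; inserting these into \eqref{Repr_Riemann_func_impr} and into the identity $\partial_x\partial_y\stackrel{(0)}{u}=f-N(\stackrel{(0)}{u}\!\!(x_{i-1},y_{j-1}))\stackrel{(0)}{u}$ turns them into discrete Gronwall recursions along the rows and columns of the mesh, controlled by $\|f\|$, $\|\psi'\|$, $\|\phi'\|$ and $\max_{i,j}N_{i,j}$. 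The genuine difficulty is that $N_{i,j}=|N(\stackrel{(0)}{u}\!\!(x_{i-1},y_{j-1}))|$ is under control only after $\stackrel{(0)}{u}$ is already known to be bounded. I would break this loop by a continuation argument on the mesh: on the largest staircase region $\Omega\subseteq\bar{D}$ which is a ``lower set'' for the mesh and on which $\|\stackrel{(0)}{u}\|\le\|u\|_{\bar{D}}+1$, one has $\max_{i,j}N_{i,j}\le\sup_{|\sigma|\le\|u\|_{\bar{D}}+1}|N(\sigma)|$, the argument of the previous paragraphs applies on $\Omega$ and gives $\|z\|_{\Omega}\le\kappa'h$; choosing $h<1/\kappa'$ makes $\|\stackrel{(0)}{u}\|<\|u\|_{\bar{D}}+1$ strictly on $\Omega$, and then \eqref{Repr_Riemann_func_impr} applied on the next diagonal layer of cells shows the strict inequality persists there, forcing $\Omega=\bar{D}$. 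Once $C_{*}$ and $C_{*}'$ are available, the rest is the routine Gronwall estimate described above.
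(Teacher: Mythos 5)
Your argument is essentially correct, but it reaches \eqref{lema_equality} by a genuinely different route from the paper. You integrate the difference of the two equations over the whole rectangle $[0,x]\times[0,y]$, isolate a consistency defect $e=[N(w_0)-N(\bar w)]\bar w$ of size $O(h)$, and close with the two-dimensional (Wendroff) Gronwall inequality; the paper instead works cell by cell with the Riemann-function representation \eqref{Repr_Riemann_func_impr}, derives the two recursive estimates of Lemma \ref{Lemma_about_aux_inequalities}, and majorizes $\|z\|_{i,j}$ by the explicit solution of the double recurrence $\mu_{i,j}=a\mu_{i-1,j}+b\mu_{i,j-1}+c$ (Lemma \ref{lema_mu}), which is the discrete counterpart of your Gronwall step. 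Your version is more elementary and transparent for Theorem \ref{theorem_main} alone; the paper's pays for its heavier bookkeeping by producing the recurrence and Lemma \ref{lema_mu} in a reusable form, which is then applied verbatim in Section \ref{Sect_Conv_result} to bound the corrections $\stackrel{(k)}{u}$. The circularity you identify (the coefficients $N_{i,j}$ are controlled only after $\stackrel{(0)}{u}$ is bounded) is real and is the crux of the paper's proof as well: the paper breaks it by induction on the mesh rows, keeping $\stackrel{(0)}{u}_{i,j}$ inside the band $[\rho_1,\rho_2]$ of \eqref{M_alpha_definition} under the smallness condition $h_2\le H_\alpha$; your staircase continuation argument is an equivalent bootstrap and works for the same reason (the lower-left corner of each newly added cell already lies in the controlled region).

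Two details deserve repair. First, the claim $|R|\le 1$ is false in general: the Riemann function for $w_{xy}+\lambda w=0$ is $J_0\bigl(2\sqrt{\lambda(x-\xi)(y-\eta)}\bigr)$ only when $\lambda\ge 0$; when $N\bigl(\stackrel{(0)}{u}_{i,j}\bigr)<0$ it becomes the modified Bessel function $I_0\ge 1$, which grows. The usable bound is the one the paper takes, $|R|\le {}_{0}F_{1}(1;N_{i,j}h_1h_2)=1+O(h_1h_2)$ on a single cell, and this suffices for your discrete Gronwall along rows and columns, so the slip is quantitative rather than structural. Second, your defect estimate requires the uniform derivative bounds $C_*'$ on $\stackrel{(0)}{u}$; these do follow from $C_*$ by integrating $\partial_{xy}\stackrel{(0)}{u}=f-N(\cdot)\stackrel{(0)}{u}$ in one variable, as you indicate, but note that the paper sidesteps this entirely by writing $\stackrel{(0)}{u}(\xi,\eta)-\stackrel{(0)}{u}_{i,j}$ as $\bigl(u(\xi,\eta)-u_{i,j}\bigr)$ plus $z$-terms and bounding the former through the derivatives of the \emph{exact} solution via \eqref{Gur_pr_eq_9}--\eqref{Gur_pr_eq_10}; adopting that device would shorten your a priori step.
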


\begin{proof}[Proof of Theorem \ref{theorem_main}]
Let us consider the auxiliary function $$z(x,y)=u(x,y)-\stackrel{(0)}{u}\!\!\!(x,y).$$ It is easy to see that this function is continuous on $\bar{D}$ and satisfies the equation
\begin{equation}\label{Gur_pr_eq_3}
    \frac{\partial^{2}z(x,y)}{\partial x\partial y}+N(\stackrel{(0)}{u}\!\!\!(x_{i-1}, y_{j-1}))z(x,y)+
\end{equation}
$$+\left[N(u(x,y))-N(\stackrel{(0)}{u}\!\!\!(x_{i-1}, y_{j-1}))\right]u(x,y)=0,\quad \forall (x,y)\in \bar{P}_{i,j},$$
together with the boundary conditions
$$z(x,0)=0, \quad z(0,y)=0,\quad\forall x\in \bar{D}_{1},\;y\in \bar{D}_{2}.$$

To prove the theorem it is enough to find a positive real constant $\kappa,$ independent on $h,$ such that
\begin{equation}\label{target_inequality}
    \left\|z(x,y)\right\|_{\bar{P}_{i,j}}\leq h\kappa,
\end{equation}
$\forall i\in\overline{1,N_{1}},\;\;\forall j\in\overline{1,N_{2}},$ where  $\left\|z(x,y)\right\|_{\bar{P}_{i,j}}=\max\limits_{(x,y)\in \bar{P}_{i,j}}\left|z(x,y)\right|.$

For further convenience we have to introduce the notation

\begin{equation}\label{Main_notation_1}
\stackrel{(0)}{u}_{i,j}=\stackrel{(0)}{u}\!\!\!(x_{i-1}, y_{j-1}),\ u_{i,s}=u(x_{i-1, y_{j-1}}),\; N^{\prime}_{i,j}=\Bigl|N^{\prime}\bigl(\stackrel{(0)}{u}_{i,j}\bigr)\Bigr|,
\end{equation}

\begin{equation}\label{Main_notation_2}
  L_{i,j}=\max\limits_{\substack{(x,y)\in \bar{P}_{i,j}\\ \theta\in [0,1]}}\Bigl|\mathbb{N}^{\prime}\bigl(\stackrel{(0)}{u}_{i,j}-\theta\bigl(\stackrel{(0)}{u}_{i,j}-u(x,y)\bigr)\bigr)\Bigr|,
\end{equation}

\begin{equation}\label{Main_notation_3}
    z_{i,j}=z(x_{i-1}, y_{j-1}),\;\left\|z\right\|_{i,j}=\left\|z(x, y)\right\|_{\bar{P}_{i,j}},
\end{equation}

\begin{equation}\label{Main_notation_4}
R_{i,j}={}_{0}F_{1}(1; N_{i,j}h_{1}h_{2}),\quad R_{i,j}^{\prime}={}_{0}F_{1}(2; N_{i,j}h_{1}h_{2})N_{i,j},
\end{equation}

\begin{equation}\label{Main_notation_5}
\left\|u\right\|=\left\|u(x,y)\right\|_{\bar{D}}, \left\|\psi^{\prime}\right\|=\left\|\psi^{\prime}(x)\right\|_{[0, X]}, \left\|\phi^{\prime}\right\|=\left\|\phi^{\prime}(y)\right\|_{[0, Y]}.
\end{equation}

It is worth to emphasize, that the principal role in the proof is assigned to the constants $N_{\alpha}$ and $L_{\alpha}$ defined in the following way
\begin{equation}\label{M_alpha_definition}
    N_{\alpha}=\max\limits_{u\in \left(\rho_{1}, \rho_{2}\right)}\left|N\left(u\right)\right|,\quad L_{\alpha}=\max\limits_{u\in \left(\rho_{1}, \rho_{2}\right)}\left|\mathbb{N}^{\prime}\left(u\right)\right|,
\end{equation}
$$\rho_{1}=\min\limits_{(x,y)\in \bar{D}}u(x,y) -\alpha, \; \rho_{2}=\max\limits_{(x,y)\in \bar{D}}u(x,y) +\alpha.$$
Here $\alpha$ denotes an arbitrary positive real number fixed throughout  the proof.
It is easy to see that according to the definition of $N_{\alpha}$ \eqref{M_alpha_definition} we have the inequality $\left\|N(u((x,y))\right\|_{\bar{D}}\leq N_{\alpha}.$

To prove Theorem \ref{theorem_main} we need the following auxiliary statement.
\begin{lemma}\label{Lemma_about_aux_inequalities}
Suppose that $u(x,y)$ and $\stackrel{(0)}{u}\!\!\!(x,y)$ are the solutions to problems \eqref{Gur_pr_eq_1}, \eqref{Gur_pr_eq_2} and   \eqref{lem_2_eq_1}, \eqref{lem_2_eq_2} respectively. Then the following inequalities hold true

$$\left\|z\right\|_{i,j}\leq \left\|z\right\|_{i-1,j}\left(1+h_{1}h_{2}R^{\prime}_{i,j}\right)+$$
\begin{equation}\label{the_first_ineq_of_z}
+h_{1}R_{i,j}\left\{h_{2}\sum\limits_{s=1}^{j-1}\Bigl[\left(2N_{i,s}+L_{i,s}\right)\left\|z\right\|_{i,s}+h\left(N_{i,s}+L_{i,s}\right)A\Bigr]\right\}+
\end{equation}
$$+h_{1}h_{2}R_{i,j}\left(L_{i,j}+N_{i,j}\right)\left\|z\right\|_{i,j-1}+h_{1}h_{2}hR_{i,j}\left(N_{i,j}+L_{i,j}\right)A,$$
$$\left\|z\right\|_{i,j}\leq \left\|z\right\|_{i,j-1}\left(1+h_{1}h_{2}R^{\prime}_{i,j}\right)+$$
\begin{equation}\label{the_second_ineq_of_z}
+h_{2}R_{i,j}\left\{h_{1}\sum\limits_{s=1}^{i-1}\Bigl[\left(2N_{s,j}+L_{s,j}\right)\left\|z\right\|_{s,j}+h\left(N_{s,j}+L_{s,j}\right)A\Bigr]\right\}+
\end{equation}
$$+h_{1}h_{2}R_{i,j}\left(L_{i,j}+N_{i,j}\right)\left\|z\right\|_{i-1,j}+h_{1}h_{2}hR_{i,j}\left(N_{i,j}+L_{i,j}\right)A,$$
for all $\left(x,y\right)\in \bar{P}_{i,j},$ $i\in\overline{1,N_{1}},$$j\in\overline{1, N_{2}},$
where
\begin{equation}\label{A_notation}
    A=\left\{\Bigl[\left\|\psi^{\prime}\right\|+Y\left(\left\|f\right\|+N_{\alpha}\left\|u\right\|\right)\Bigr]^{2}+\Bigl[\left\|\phi^{\prime}\right\|+X\left(\left\|f\right\|+N_{\alpha}\left\|u\right\|\right)\Bigr]^{2}\right\}^{\frac{1}{2}},
\end{equation}
$$\left\|z\right\|_{0,j}=\left\|z\right\|_{i,0}=0,\quad\forall i\in\overline{1,N_{1}},\;\forall j\in\overline{1,N_{2}}.$$

\end{lemma}
\begin{proof}[Proof of  Lemma \ref{Lemma_about_aux_inequalities}.]
Unless otherwise stated we assume that $(x,y)\in P_{i,j}$ for some fixed positive integers $i\in \overline{1, N_{1}}$ and $j\in \overline{1, N_{2}}.$

We begin with the proof of inequality \eqref{the_first_ineq_of_z}. As it was mentioned above, function $z(x,y)$ can be represented by virtue of the Reimann function in the following form:
$$z(x,y)=z(x_{i-1},y)+$$
\begin{equation}\label{Gur_pr_eq_4}
+\int\limits_{x_{i-1}}^{x}R(\xi, y_{j-1}, x,y)\left[\frac{\partial}{\partial \xi}z(\xi, y_{j-1})\right]d\xi-
\end{equation}
$$-\int\limits_{y_{j-1}}^{y}\left[\frac{\partial}{\partial \eta}R(x_{i-1}, \eta, x,y)\right]z(x_{i-1}, \eta)d\eta+$$
$$+\int\limits_{x_{i-1}}^{x}\int\limits_{y_{j-1}}^{y}R(\xi,\eta,x,y)\left[N(u(\xi,\eta))-N(\stackrel{(0)}{u}\!\!\!(x_{i-1}, y_{j-1}))\right]u(\xi,\eta)d\xi d\eta.$$

Equality \eqref{Gur_pr_eq_4} yields the estimate (see notation \eqref{Main_notation_1} -- \eqref{Main_notation_4})
$$\left\|z\right\|_{i,j}\leq \left\|z\right\|_{i-1,j}+h_{1}R_{i,j}\left\|\frac{\partial z(x, y_{j-1})}{\partial x}\right\|_{i,j-1}+h_{1}h_{2}R^{\prime}_{i,j}\left\|z\right\|_{i-1,j}+$$
\begin{equation}\label{Gur_pr_eq_5}
  +h_{1}h_{2}R_{i,j}\Bigl\|\mathbb{N}(u(\xi,\eta))-\mathbb{N}(\stackrel{(0)}{u}_{i,j})\Bigr\|_{i,j}+
\end{equation}
$$$$
$$+h_{1}h_{2}R_{i,j}\Bigl\|N\bigl(\stackrel{(0)}{u}_{i,j}\bigr)\stackrel{(0)}{u}_{i, j}-N\bigl(\stackrel{(0)}{u}_{i,j}\bigr)u(\xi,\eta)\Bigr\|_{i,j}\leq$$
$$\leq \left\|z\right\|_{i-1,j}\left(1+h_{1}h_{2}R^{\prime}_{i,j}\right)+h_{1}R_{i,j}\left\|\frac{\partial z(x, y_{j-1})}{\partial x}\right\|_{i,j-1}+$$
$$+h_{1}h_{2}R_{i,j}\left(L_{i,j}+N_{i,j}\right)\bigl\|u(x,y)-\stackrel{(0)}{u}_{i,j}\bigr\|_{i,j}\leq$$
$$\leq \left\|z\right\|_{i-1,j}\left(1+h_{1}h_{2}R^{\prime}_{i,j}\right)+h_{1}R_{i,j}\left\|\frac{\partial z(x, y_{j-1})}{\partial x}\right\|_{i,j-1}+$$
$$+h_{1}h_{2}R_{i,j}\left(L_{i,j}+N_{i,j}\right)\Bigl[\left\|u(x,y)-u_{i,j}\right\|_{i,j}+\left|z_{i,j}\right|\Bigr].$$
Taking into account the obvious inequality  $\left|z_{i,j}\right|\leq \left\|z\right\|_{i,j-1},\footnote{The inequality $\left|z_{i,j}\right|\leq \left\|z\right\|_{i-1,j}$ is valid as well and it will be used in the proof of inequality \eqref{the_second_ineq_of_z}.}$ we can summarize inequalities \eqref{Gur_pr_eq_5} in the following way
$$\left\|z\right\|_{i,j}\leq\left\|z\right\|_{i-1,j}\left(1+h_{1}h_{2}R^{\prime}_{i,j}\right)+h_{1}R_{i,j}\left\|\frac{\partial z(x, y_{j-1})}{\partial x}\right\|_{i,j-1}+$$
\begin{equation}\label{Ultima_inequality}
+h_{1}h_{2}R_{i,j}\left(L_{i,j}+N_{i,j}\right)\left\|z\right\|_{i,j-1}+
\end{equation}
$$+h_{1}h_{2}R_{i,j}\left(L_{i,j}+N_{i,j}\right)\left\|u(x,y)-u_{i,j}\right\|_{i,j}.$$
Let us estimate the expressions $\left\|\frac{\partial z(x, y_{j-1})}{\partial x}\right\|_{i,j-1}$ and  $\left\|u(x,y)-u\left(x_{i-1},y_{j-1}\right)\right\|_{i,j}$ arising in the right side of inequality \eqref{Ultima_inequality}. We start with the latter one. The mean value theorem provides us with the inequality
$$\left|u(\xi, \eta)-u(x_{i-1}, y_{j-1})\right|\leq \left|u(x, y)-u(x_{i-1}, y)\right|+$$
$$+\left|u(x_{i-1}, y)-u(x_{i-1}, y_{j-1})\right|\leq$$
\begin{equation}\label{Gur_pr_eq_8}
    \leq\left|x-x_{i-1}\right|\left\|\frac{\partial u(x,y)}{\partial x}\right\|_{\bar{D}}+\left|y-y_{j-1}\right|\left\|\frac{\partial u(x,y)}{\partial y}\right\|_{\bar{D}}.
\end{equation}
Furthermore, equation \eqref{Gur_pr_eq_1} yields us the equalities
\begin{equation}\label{Gur_pr_eq_9}
    \frac{\partial u(x,y)}{\partial x}=\psi^{\prime}(x)+\int\limits_{0}^{y}\left[f(x,\eta)-N(u(x,\eta))u(x,\eta)\right]d\eta,
\end{equation}
\begin{equation}\label{Gur_pr_eq_10}
    \frac{\partial u(x,y)}{\partial y}=\phi^{\prime}(y)+\int\limits_{0}^{x}\left[f(\xi,y)-N(u(\xi,y))u(\xi,y)\right]d\xi.
\end{equation}

Combining \eqref{Gur_pr_eq_8} with \eqref{Gur_pr_eq_9} and \eqref{Gur_pr_eq_10} we get the estimate
$$\left|u(x, y)-u(x_{i-1}, y_{j-1})\right|\leq$$
\begin{equation}\label{Gur_pr_eq_11}
    \leq h_{1}\left(\left\|\psi^{\prime}(x)\right\|_{\left[0, X\right]}+Y\bigl[\left\|f(x,y)\right\|_{\bar{D}}+\left\|N(u(x,y))\right\|_{\bar{D}}\left\|u(x,y)\right\|_{\bar{D}}\bigr]\right)+
\end{equation}
$$+h_{2}\left(\left\|\phi^{\prime}(y)\right\|_{\left[0, Y\right]}+X\bigl[\left\|f(x,y)\right\|_{\bar{D}}+\left\|N(u(x,y))\right\|_{\bar{D}}\left\|u(x,y)\right\|_{\bar{D}}\bigr]\right)\leq$$
$$\leq A\sqrt{h_{1}^{2}+h_{2}^{2}}=Ah.$$

Now let us pass to the estimation of $\left|\frac{\partial z(x,y_{j-1})}{\partial x}\right|.$ Equation \eqref{Gur_pr_eq_3} implies the equality
$$\frac{\partial z(x,y_{j-1})}{\partial x}=$$
\begin{equation}\label{Gur_pr_eq_12}
   =-\sum\limits_{s=1}^{j-1}\int\limits_{y_{s-1}}^{y_{s}}\Bigl(N\bigl(\stackrel{(0)}{u}_{i,s}\bigr)z(x,\eta)+\Bigl[N(u(x,\eta))-N\bigl(\stackrel{(0)}{u}_{i,s}\bigr)\Bigr]u(x,\eta)\Bigr)d\eta.
\end{equation}
Using notation \eqref{Main_notation_2} from \eqref{Gur_pr_eq_12} it is easy to get the estimate ($\forall x\in\left[x_{i-1}, x_{i}\right]$)
\begin{equation}\label{Gur_pr_eq_13}
   \left|\frac{\partial z(x,y_{j-1})}{\partial x}\right|\leq \sum\limits_{s=1}^{j-1}N_{i,s}\int\limits_{y_{s-1}}^{y_{s}}\left|z(x,\eta)\right|d\eta+
\end{equation}
$$  +\sum\limits_{s=1}^{j-1}\int\limits_{y_{s-1}}^{y_{s}}\Bigl[\left(L_{i,s}+N_{i,s}\right)\left\{\left|u(x, \eta)-u_{i,s}\right|+\left|z_{i,s}\right|\right\}\Bigr]d\eta.$$

Combining inequalities \eqref{Gur_pr_eq_11} and \eqref{Gur_pr_eq_13} we obtain
\begin{equation}\label{Gur_pr_eq_14}
    \left|\frac{\partial z(x,y_{j-1})}{\partial x}\right|\leq h_{2}\sum\limits_{s=1}^{j-1}\left(2N_{i,s}+L_{i,s}\right)\left\|z\right\|_{i,s}+h_{2}hA\sum\limits_{s=1}^{j-1}\left(N_{i,s}+L_{i,s}\right).
\end{equation}

Finally, using estimates \eqref{Gur_pr_eq_11} and \eqref{Gur_pr_eq_14} we get from \eqref{Ultima_inequality} the target inequality \eqref{the_first_ineq_of_z}.

The proof of inequality \eqref{the_second_ineq_of_z} is mostly similar to the proof of inequality \eqref{the_first_ineq_of_z}. However, to obtain inequality \eqref{the_second_ineq_of_z} we have to use the formula
$$z(x,y)=z(x,y_{j-1})+$$
\begin{equation}\label{the_second_ineq_eq_1}
+\int\limits_{x_{i-1}}^{x}\left[\frac{\partial}{\partial \xi}R(\xi, y_{j-1}, x,y)\right]z(\xi, y_{j-1})d\xi-
\end{equation}
$$-\int\limits_{y_{j-1}}^{y}R(x_{i-1}, \eta, x,y)\left[\frac{\partial}{\partial \eta}z(x_{i-1}, \eta)\right]d\eta+$$
$$+\int\limits_{x_{i-1}}^{x}\int\limits_{y_{j-1}}^{y}R(\xi,\eta,x,y)\left[N(u(\xi,\eta))-N(\stackrel{(0)}{u}(x_{i-1}, y_{j-1}))\right]u(\xi,\eta)d\xi d\eta.$$
instead of formula \eqref{Gur_pr_eq_4}. Formula \eqref{the_second_ineq_eq_1} leads us to the inequality
$$\left\|z\right\|_{i,j}\leq\left\|z\right\|_{i,j-1}\left(1+h_{1}h_{2}R^{\prime}_{i,j}\right)+h_{2}R_{i,j}\left\|\frac{\partial z(x_{i-1}, y)}{\partial y}\right\|_{i-1,j}+$$
\begin{equation}\label{the_second_ineq_Ultima_inequality}
+h_{1}h_{2}R_{i,j}\left(L_{i,j}+N_{i,j}\right)\left\|z\right\|_{i-1,j}+
\end{equation}
$$+h_{1}h_{2}R_{i,j}\left(L_{i,j}+N_{i,j}\right)\left\|u(x,y)-u_{i,j}\right\|_{i,j}.$$

Instead of inequality \eqref{Gur_pr_eq_14} we have to use the following one $(\forall y\in [y_{j-1},y_j]):$
\begin{equation}\label{the_second_ineq_Gur_pr_eq_14}
    \left|\frac{\partial z(x_{i-1},y)}{\partial y}\right|\leq h_{1}\sum\limits_{s=1}^{i-1}\left(2N_{s,j}+L_{s,i}\right)\left\|z\right\|_{s,j}+h_{1}hA\sum\limits_{s=1}^{i-1}\left(N_{s,j}+L_{s,j}\right),
\end{equation}
which can be obtained in a similar way. Finally, estimates \eqref{Gur_pr_eq_11}, \eqref{the_second_ineq_Ultima_inequality} and \eqref{the_second_ineq_Gur_pr_eq_14} lead us to inequality \eqref{the_second_ineq_of_z}, which was to be proved.

The proof of Lemma \ref{Lemma_about_aux_inequalities} is completed.
\end{proof}

To use the results of Lemma \ref{Lemma_about_aux_inequalities} we have to make an assumption about the correlation between $h_{1}$ and $h_{2}.$ It is precisely this assumption that determines which of the estimates, \eqref{the_first_ineq_of_z} or \eqref{the_second_ineq_of_z}, will be used in the further reasoning.  Without loss of generality we assume that\footnote{Under this assumption inequality \eqref{the_second_ineq_of_z} is not useful for us and we need to use inequality \eqref{the_first_ineq_of_z} instead. However, if we were assumed that $h_{1}>h_{2}$ we would be forced to use inequality \eqref{the_second_ineq_of_z} to prove the theorem.}
\begin{equation}\label{Assumption_on_h}
  h_{1}\leq h_{2} \Leftrightarrow N_{2}\leq \frac{YN_{1}}{X}.
\end{equation}

Taking into account estimate \eqref{the_first_ineq_of_z} together with the definitions of constants $N_{\alpha},$ and $L_{\alpha}$ \eqref{M_alpha_definition}, we can conclude that inequalities
\begin{equation}\label{rem_input}
     \rho_{1}\leq \stackrel{(0)}{u}_{k,l}\leq \rho_{2} ,\quad \forall k\in\overline{0,N_{1}-1},\;\forall l\in\overline{0,j},\; j<N_{2}
\end{equation}
   imply the estimates
   $$N_{k,l}\leq N_{\alpha},\quad L_{k,l}\leq L_{\alpha},$$
   $$\left\|z\right\|_{k,l}\leq \left(1+h_{1}h_{2}R_{\alpha}^{\prime}\right)\left\|z\right\|_{k-1,l}+$$
   \begin{equation}\label{rem_out}
   +h_{1}R_{\alpha}(2N_{\alpha}+L_{\alpha})h_{2}\sum\limits_{s=1}^{l-1}\left\|z\right\|_{i,s}+h_{1}h_{2}R_{\alpha}(L_{\alpha}+N_{\alpha})\left\|z\right\|_{k,l-1}+
   \end{equation}
   $$+h_{1}hR_{\alpha}A(N_{\alpha}+L_{\alpha})(Y+h_{2}),\quad k\in\overline{1,N_{1}},\;\forall l\in\overline{1,j+1},$$
   where
\begin{equation}\label{R_alpha}
    R_{\alpha}={}_{0}F_{1}\left(1; N_{\alpha}h_{2}^{2}\right),\quad R_{\alpha}^{\prime}={}_{0}F_{1}\left(2; N_{\alpha}h_{2}^{2}\right)N_{\alpha}.
\end{equation}

However, generally speaking, conditions \eqref{rem_input} could not be satisfied for all $l\in\overline{1, N_{2}}$ unless some restriction on the value of $h_{2}$ is imposed. To find out this restriction we have to study some properties of the sequence of real numbers $\mu_{i,j}$  $\forall i\in\overline{0,N_{1}},$ $\forall j\in \overline{0,N_{2}}$ defined by formulas
  \begin{equation}\label{lem_1_eq_1}
    \mu_{i,j}=a\mu_{i-1,j}+b\mu_{i,j-1}+c,\quad \mu_{0,j}=\mu_{i,0}=0
  \end{equation}
 with
$$a=1+h_{1}a_{1}(h_{2})=1+h_{1}h_{2}R^{\prime}_{\alpha},$$
\begin{equation}\label{Gur_pr_eq_15_1}
 b=h_{1}b_{1}(h_{2})=h_{1}\Bigl[R_{\alpha}Y\left(2N_{\alpha}+L_{\alpha}\right)+h_{2}R_{\alpha}\left(N_{\alpha}+L_{\alpha}\right)\Bigr],
\end{equation}
$$ c=h_{1}hc_{1}(h_{2})=h_{1}h\Bigl[R_{\alpha}A\left(N_{\alpha}+L_{\alpha}\right)\left(Y+h_{2}\right)\Bigr].$$

\begin{lemma}\label{lema_mu}
Suppose that real numbers $\mu_{i,j}$  $\forall i\in\overline{0,N_{1}},$ $\forall j\in \overline{0,N_{2}}$ are defined by formulas \eqref{lem_1_eq_1} with $$a=1+h_{1}a_{1},\quad b=h_{1}b_{1},\quad c=h_{1}hc_{1}$$ and assumption \eqref{Assumption_on_h} holds. Then
\begin{equation}\label{lemma_mu_eq_1}
    \mu_{i,j}\leq hXc_{1}\exp\Bigl((X+Y)b_{1}+Xa_{1}\Bigr),\quad \forall  i\in \overline{1,N_{1}},\; j\in\overline{1,N_{2}}.
\end{equation}
\end{lemma}
\begin{proof}[Proof of Lemma \ref{lema_mu}]
Using the method of mathematical induction, it is not hard to prove the explicit formula for calculation of $\mu_{i,j}$
  \begin{equation}\label{lem_1_eq_2}
    \mu_{i,j}=\left\{
                \begin{array}{cc}
                  0 & \mbox{ if } i=0, j\in \overline{1, N_{2}} \mbox{ or } j=0, i\in \overline{1, N_{1}},\\
                  c\sum\limits_{k=0}^{j-1}\sum\limits_{p=0}^{i-1}\frac{(k+p)!}{k!p!}a^{p}b^{k} & \forall i\in\overline{1, N_{1}},\; j\in\overline{1, N_{2}}. \\
                \end{array}
              \right.
  \end{equation}
Using formula \eqref{lem_1_eq_2} and assumption \eqref{Assumption_on_h} we get
$$\mu_{i,j}\leq\mu_{N_{1}, N_{2}}=c\sum\limits_{p=0}^{N_{1}-1}a^{p}\sum\limits_{k=0}^{N_{2}-1}\frac{(k+p)!}{k!p!}b^{k}=$$
$$=c\sum\limits_{p=0}^{N_{1}-1}a^{p}\sum\limits_{k=0}^{N_{2}-1}\frac{1}{k!}(p+1)(p+2)\ldots (p+k)h_{1}^{k}\left(\frac{b}{h_{1}}\right)^{k}\leq$$
$$\leq c\sum\limits_{p=0}^{N_{1}-1}a^{p}\sum\limits_{k=0}^{N_{2}-1}\frac{1}{k!}(N_{1}+N_{2})^{k}h_{1}^{k}\left(\frac{b}{h_{1}}\right)^{k}\leq$$
$$\leq c\sum\limits_{p=0}^{N_{1}-1}a^{p}\sum\limits_{k=0}^{N_{2}-1}\frac{1}{k!}\left(N_{1}+\frac{YN_{1}}{X}\right)^{k}\left(\frac{X}{N_{1}}\right)^{k}\left(\frac{b}{h_{1}}\right)^{k}=$$
$$=c\sum\limits_{p=0}^{N_{1}-1}a^{p}\sum\limits_{k=0}^{N_{2}-1}\frac{\left(\left(X+Y\right)b_{1}\right)^{k}}{k!}\leq hXc_{1}a^{N_{1}-1}\exp\Bigl((X+Y)b_{1}\Bigr)=$$
$$=hXc_{1}\left(1+\frac{X}{N_{1}}a_{1}\right)^{N_{1}-1}\exp\Bigl((X+Y)b_{1}\Bigr)\leq hXc_{1}\exp\Bigl((X+Y)b_{1}+Xa_{1}\Bigr).$$
The proof of Lemma \ref{lema_mu} is completed.
\end{proof}

To obtain the required restriction on the maximum value of $h_{2}$ mentioned above, we have to consider the auxiliary function $$E(h_{1}, h_{2})=\sqrt{h_{1}^{2}+h_{2}^{2}}c_{1}(h_{2})X\exp\Bigl((X+Y)b_{1}(h_{2})+Xa_{1}(h_{2})\Bigr).$$
Taking into account assumption \eqref{Assumption_on_h} we arrive at the inequality
$$E(h_{1},h_{2})\leq E(h_{2},h_{2})=\mathcal{E}(h_{2}).$$
Function $\mathcal{E}(h_{2})$ is strictly increasing function on $[0,+\infty],$ $\lim\limits_{h_2 \rightarrow + \infty}\mathcal{E}(h_{2})=+ \infty,$ $\mathcal{E}(0)=0.$ This fact implies the existence and uniqueness of the constant  $H_{\alpha}> 0,$ such that $$\mathcal{E}(H_{\alpha})=\alpha,\quad\mbox{and}\quad \mathcal{E}(h_{2})<\alpha,\forall h_{2}\in \left[0, H_{\alpha}\right).$$
Henceforward we assume that
\begin{equation}\label{size_restriction}
   h_{2}\leq H_{\alpha}.
\end{equation}
It follows from Lemma \ref{lema_mu} that inequality \eqref{size_restriction} provides the estimates
\begin{equation}\label{value_for_kappa}
    \mu_{i,j}\leq hK\leq \alpha,\quad
    K=c_{1}(H_{\alpha})X\exp\Bigl((X+Y)b_{1}(H_{\alpha})+Xa_{1}(H_{\alpha})\Bigr),
\end{equation}
$$i\in\overline{1,N_{1}},\; j\in \overline{1, N_{2}}.$$

Now we are in position to prove inequality \eqref{target_inequality} with $\kappa=K$ (see notation \eqref{value_for_kappa}) via the method of mathematical induction. We will use induction with respect to $j.$

{\it The base case,} j=1.

Taking into account that $u(x,0)=\stackrel{(0)}{u}\!\!\!(x,0)$  $\forall\; x\in \left[0, X\right]$ we arriveat the conclusion that inequalities \eqref{rem_input} are valid for $j=0.$ This fact provides inequalities \eqref{rem_out} for $j=1,$ which, using notations \eqref{R_alpha} and \eqref{Gur_pr_eq_15_1}, can be represented in the form of
\begin{equation}\label{induction_j=1}
    \left\|z(x,y)\right\|_{i,1}\leq \left\|z\right\|_{i-1,1}a+c, \quad \forall i,\in\overline{1,N_{1}}.
\end{equation}
Taking into account formula \eqref{lem_1_eq_1} it is easy to see that
\begin{equation}\label{aux_z_mu_ineq}
  \left\|z(x,y)\right\|_{i,1}\leq\mu_{i,1},\quad \forall i\in\overline{1,N_{1}}
\end{equation}
and, consequently, from \eqref{value_for_kappa} it follows that
\begin{equation}\label{target_ineq_case_1}
   \left\|z(x,y)\right\|_{i,1}\leq hK, \quad \forall i\in\overline{1,N_{1}}.
\end{equation}
Inequalities \eqref{target_ineq_case_1} prove inequality  \eqref{target_inequality} with $\kappa=K$ for $j=1$ and for all $i\in \overline{1,N_{1}}.$

{\it Induction step.}

Assume that inequality \eqref{target_inequality} and auxiliary inequality
\begin{equation}\label{aux_z_mu_ineq_assumption}
  \left\|z(x,y)\right\|_{i,j}\leq\mu_{i,j}
\end{equation}
 are proved for all $j\in\overline{1,n},\;i\in \overline{1,N_{1}},\;1<n<N_{2}.$
 This assumption implies  the validity of inequalities \eqref{rem_input} for $j=n$ and, consequently, we obtain inequalities \eqref{rem_out} for $j=n.$ Combining the auxiliary inequality \eqref{aux_z_mu_ineq_assumption} together with obvious inequalities
 $$\mu_{k,l-1}\leq \mu_{k,l},\quad\forall k\in\overline{1,N_{1}}, l\in\overline{1,N_{2}}$$
  and inequalities \eqref{rem_out} with $j=n,$ we arrive at the following estimate for $z_{i,n+1}$:
   $$\left\|z\right\|_{i,n+1}\leq a\mu_{i-1,n+1}+c+$$
   \begin{equation}\label{estimation_for_z_{i,n+1}}
   +h_{1}R_{\alpha}(2N_{\alpha}+L_{\alpha})h_{2}\sum\limits_{s=1}^{n}\mu_{i,s}+h_{1}h_{2}R_{\alpha}(L_{\alpha}+N_{\alpha})\mu_{i,n}\leq
   \end{equation}
   $$\leq a\mu_{i-1,n+1}+b\mu_{i,n}+c=\mu_{i,n+1}.$$
Estimate \eqref{estimation_for_z_{i,n+1}} is valid for all $i\in\overline{1, N_{1}}$ and proves inequality \eqref{aux_z_mu_ineq_assumption} for $j=n+1.$ Furthermore, taking into account estimates \eqref{value_for_kappa} and estimate \eqref{estimation_for_z_{i,n+1}} we immediately obtain the validity of inequality \eqref{target_inequality} with $\kappa=K,$ for $j=n+1$ and for all $i\in \overline{1,N_{1}},$ which was to be demonstrated:
$$\left\|z(x,y)\right\|_{i,n+1}\leq \mu_{i,n+1}\leq hK\leq \alpha, \;\;\forall i\in \overline{1,N_{1}}.$$

 By the principle of mathematical induction it follows that inequality \eqref{target_inequality} is valid for all $i\in\overline{1, N_{1}},$ $j\in\overline{1, N_{2}}.$ Thereby, the theorem is proved.
\end{proof}

\section{The FD-method for nonlinear Goursat problem: the convergence result}\label{Sect_Conv_result}
In this section we intend to study the question of sufficient conditions providing convergence of the FD-method \eqref{FD-approx}, \eqref{mesh}, \eqref{lem_2_eq_1}, \eqref{lem_2_eq_2}, \eqref{corrections_equation}, \eqref{corrections_conditions} to the exact solution of the Goursat problem \eqref{Gur_pr_eq_1}, \eqref{Gur_pr_eq_2}.
In other words, given that the parameters $h_{1}$ and $h_{2}$ are sufficiently small\footnote{We assume that $h_1\leq h_2\leq 1$ and inequality (49) holds true.} we will prove that
\begin{equation}\label{converg_inequality}
    \lim\limits_{m\rightarrow \infty}\sum\limits_{k=0}^{m}\bigl\|\stackrel{(k)}{u}\!\!\!(x,y)\bigr\|_{1,\bar{D}}<\infty,
\end{equation}
and
\begin{equation}\label{converg_to_exact_solution}
    u(x,y)=\sum\limits_{k=0}^{\infty}\stackrel{(k)}{u}\!\!\!(x,y),
\end{equation}
where $$
\bigl\|f(x,y)\bigl\|_{1, \bar{D}}=\max\left\{\bigl\|f(x,y)\bigr\|_{\bar{D}}, \max\limits_{i\in \overline{1,N_1},j\in\overline{1,N_2}}\left[\bigl\|\frac{\partial}{\partial x}f(x,y)\bigr\|_{P_{i,j}}^{2}+\bigl\|\frac{\partial}{\partial y}f(x,y)\bigr\|_{P_{i,j}}^{2}\right]^{\frac{1}{2}}\right\},
$$
for all $f(x,y),$ such that $f(x,y)\in C(\overline{D})$ and $f(x,y)\in C^{1,1}(P_{i,j}), i\in \overline{1,N_1},j\in\overline{1,N_2}.$ To achieve that we will use the method of generating functions and the main part of this section is devoted to the derivation of an appropriate equation for a generating function.

We begin with the estimation of $\bigl\|\stackrel{(k)}{u}\!\!\!(x,y)\bigr\|_{1,\bar{D}}.$

The piece-wise constant function $\stackrel{(s)}{u_{\bot}}\!\!\left(x,y\right),$ $(x,y)\in \bar{D}$ defined as
$$\stackrel{(s)}{u_{\bot}}\!\!\left(x,y\right)=\stackrel{(s)}{u}\!\!\left(x_{i-1}, y_{j-1}\right),\quad \mbox{if}\quad (x,y)\in [x_{i-1}, x_{i})\times [y_{j-1}, y_{j}),\quad \forall i\in\overline{1,N_{1}},\forall j\in\overline{1,N_{2}}$$
allows us to represent equations \eqref{corrections_equation} in the form which is valid for all $(x,y)\in \bar{D},$ i.e.,
$$\frac{\partial^{2} \stackrel{(k)}{u}\!\!(x,y)}{\partial x \partial y}+N\bigl(\stackrel{(0)}{u_{\bot}}\!\!(x,y)\bigr)\stackrel{(k)}{u}\!\!(x,y)+$$
\begin{equation}\label{corrections_equation_modif}
  +N^{\prime}\bigl(\stackrel{(0)}{u_{\bot}}\!\!\left(x,y\right)\bigr)\stackrel{(0)}{u}\!\!\left(x,y\right)\stackrel{(k)}{u_{\bot}}\left(x,y\right)=
\end{equation}
$$=-\sum\limits_{s=1}^{k-1}A_{k-s}\bigl(N; \stackrel{(0)}{u_{\bot}}\!\!(x,y),\ldots \stackrel{(k-s)}{u_{\bot}}\!\!(x,y) \bigr)\stackrel{(s)}{u}\!\!(x,y)+$$
$$+\sum\limits_{s=0}^{k-1}\left[A_{k-1-s}\bigl(N,\stackrel{(0)}{u_{\bot}}\!\!\left(x,y\right),\ldots, \stackrel{(k-1-s)}{u_{\bot}}\!\!\left(x,y\right)\bigr)-\right.$$
$$\left.-A_{k-1-s}\bigl(N,\stackrel{(0)}{u}\!\!\left(x,y\right),\ldots, \stackrel{(k-1-s)}{u}\!\!\left(x,y\right)\bigr)\right]\stackrel{(s)}{u}\!\!\left(x,y\right)-$$
$$-A_{k}\bigl(N, \stackrel{(0)}{u_{\bot}}\!\!\left(x, y\right),\ldots,\stackrel{(k-1)}{u_{\bot}}\!\!\left(x, y\right),0\bigr)\stackrel{(0)}{u}\!\!\left(x, y\right)=\stackrel{(k)}{F}\!\!(x,y).$$

As it was mentioned above (see previous section or \cite{Kurant}), the solution to the $k$-th equation of system \eqref{corrections_equation} ($k>1$) on $\bar{P}_{i,j}$ can be represented in the following form:
$$   $$
\begin{equation}\label{Repr_Riemann_func_1}
\stackrel{(k)}{u}\!\!(x,y)=\stackrel{(k)}{u}\!\!(x_{i-1},y)+\int\limits_{x_{i-1}}^{x}R(\xi, y_{j-1}; x,y)\left[\frac{\partial}{\partial \xi}\stackrel{(k)}{u}\!\!(\xi, y_{j-1})\right]d\xi-
\end{equation}
$$-\int\limits_{y_{j-1}}^{y}\left[\frac{\partial}{\partial \eta}R(x_{i-1}, \eta; x,y)\right]\stackrel{(k)}{u}\!\!(x_{i-1}, \eta)d\eta+$$
$$-N^{\prime}\bigl(\stackrel{(0)}{u}_{i,j}\bigr)\stackrel{(k)}{u}_{i,j}\int\limits_{x_{i-1}}^{x}\int\limits_{y_{j-1}}^{y}R(\xi,\eta;x,y)\stackrel{(0)}{u}\!\!\!(\xi,\eta)d\xi d\eta+$$
$$+\int\limits_{x_{i-1}}^{x}\int\limits_{y_{j-1}}^{y}R(\xi,\eta;x,y)\stackrel{(k)}{F}\!\!\!(\xi,\eta)d\xi d\eta.$$

As it follows from Theorem \ref{theorem_main}, the function $\stackrel{(0)}{u}\!\!\!(h,x,y)=\stackrel{(0)}{u}\!\!\!(x,y)$ tends uniformly to $u(x,y)$ on $\bar{D}$ as $h\rightarrow 0.$ Hence, taking into account the existence and uniqueness of the continuous on $\bar{D}$ solution $u(x,y)$ to the Goursat problem \eqref{Gur_pr_eq_1}, \eqref{Gur_pr_eq_2}, we can conclude that there exists an independent on $h_{1}$ and $h_{2}$ constant $M_{u},$ such that
\begin{equation}\label{bound for u^{(0)}}
    \Bigl\|\stackrel{(0)}{u}\!\!\!(x,y)\Bigr\|_{\bar{D}}\leq M_{u}.
\end{equation}
The last fact provides the existence of the independent on $h_{1}$ and $h_{2}$ constants $M_{N}, M^{\prime}_{N}, M_{R}, M^{\prime}_{R}>0,$ such that
\begin{equation}\label{other_bounds}
    \Bigl\|N\bigl(\stackrel{(0)}{u}\!\!\!(x,y)\bigr)\Bigr\|_{\bar{D}}\leq M_{N},\; \Bigl\|N^{\prime}\bigl(\stackrel{(0)}{u}\!\!\!(x,y)\bigr)\Bigr\|_{\bar{D}}\leq M^{\prime}_{N},
\end{equation}
$$\Bigl\|{}_{0}F_{1}\Bigl(1,\bigl|N\bigl(\stackrel{(0)}{u}\!\!\!(x,y)\bigr)\bigr|\Bigr)\Bigr\|_{\bar{D}}\leq M_{R},$$ $$\Bigl\|{}_{0}F_{1}\Bigl(2,\bigl|N\bigl(\stackrel{(0)}{u}\!\!\!(x,y)\bigr)\bigr|\Bigr)\bigl|N\bigl(\stackrel{(0)}{u}\!\!\!(x,y)\bigr)\bigr|\Bigr\|_{\bar{D}}\leq M^{\prime}_{R}.$$
Using inequalities \eqref{bound for u^{(0)}}, \eqref{other_bounds} together with notations \eqref{Main_notation_1}, \eqref{Main_notation_2}, \eqref{Main_notation_3} from  equation \eqref{corrections_equation_modif}  we can derive  the estimate ($\forall x\in \left[x_{i-1}, x_{i}\right]$)

$$\Bigl|\frac{\partial \stackrel{(k)}{u}\!\!\!(x,y_{j-1})}{\partial x}\Bigr|\leq \sum\limits_{s=1}^{j-1}\int\limits_{y_{s-1}}^{y_{s}}N_{i, s}\bigl|\stackrel{(k)}{u}\!\!\!(x,y)\bigr|d y+$$
\begin{equation}\label{estim_for_derivative}
+\sum\limits_{s=1}^{j-1}\int\limits_{y_{s-1}}^{y_{s}}N^{\prime}_{i,s}\bigl|\stackrel{(k)}{u}_{i,s}\bigr|\bigl|\stackrel{(0)}{u}\!\!\!(x,y)\bigr|d y+\int\limits_{0}^{y_{j-1}}\bigl|\stackrel{(k)}{F}\!\!\!(x,y)\bigr|d y\leq
\end{equation}
$$\leq h_{2}\sum\limits_{s=1}^{j-1}N_{i,s}\bigl\|\stackrel{(k)}{u}\bigr\|_{i,s}+h_{2}\sum\limits_{s=1}^{j-1}N^{\prime}_{i,s}\bigl\|\stackrel{(k)}{u}\bigr\|_{i,s}\bigl\|\stackrel{(0)}{u}\bigr\|_{i,s}+Y\bigl\|\stackrel{(k)}{F}\bigr\|\leq$$
$$\leq\left(M_{N}+M^{\prime}_{N}M_{u}\right)h_{2}\sum\limits_{s=1}^{j-1}\bigl\|\stackrel{(k)}{u}\bigr\|_{i,s}+Y\bigl\|\stackrel{(k)}{F}\bigr\|,$$
where
$$\bigl\|\stackrel{(k)}{u}\bigr\|_{i,s}=\bigl\|\stackrel{(k)}{u}(x,y)\bigr\|_{\bar{P}_{i,s}},\;\bigl\|\stackrel{(k)}{F}\bigr\|=\bigl\|\stackrel{(k)}{F}(x,y)\bigr\|_{\bar{D}}.$$

Combining representation \eqref{Repr_Riemann_func_1} with estimate \eqref{estim_for_derivative} we obtain the inequality
\begin{equation}\label{u_estimation}
   \bigl\|\stackrel{(k)}{u}\bigr\|_{i,j}\leq \left(1+h_{1}h_{2}M^{\prime}_{R}\right)\bigl\|\stackrel{(k)}{u}\bigr\|_{i-1,j}+h_{1}h_{2}M^{\prime}_{N}M_{R}M_{u}\bigl\|\stackrel{(k)}{u}\bigr\|_{i,j-1}+
\end{equation}
$$+h_{1}M_{R}\left\{\left(M_{N}+M^{\prime}_{N}M_{u}\right)h_{2}\sum\limits_{s=1}^{j-1}\bigl\|\stackrel{(k)}{u}\bigr\|_{i,s}+Y\bigl\|\stackrel{(k)}{F}\bigr\|\right\}+h_{1}h_{2}M_{R}\bigl\|\stackrel{(k)}{F}\bigr\|.$$

Denoting expression $\bigl\|\stackrel{(k)}{u}\bigr\|_{i,j}\bigl\|\stackrel{(k)}{F}\bigr\|^{-1}$ by $\stackrel{(k)}{U}_{i,j},$ we can rewrite inequality \eqref{u_estimation} in the form of
\begin{equation}\label{u_estimation_1}
   \stackrel{(k)}{U}_{i,j}\leq \left(1+h_{1}h_{2}M^{\prime}_{R}\right)\stackrel{(k)}{U}_{i-1,j}+h_{1}h_{2}M^{\prime}_{N}M_{R}M_{u}\stackrel{(k)}{U}_{i,j-1}+
\end{equation}
$$+h_{1}M_{R}\left\{h_{2}\left(M_{N}+M^{\prime}_{N}M_{u}\right)\sum\limits_{s=1}^{j-1}\stackrel{(k)}{U}_{i,s}+Y\right\}+h_{1}h_{2}M_{R}.$$

Using the method of mathematical induction it is easy to prove (see the proof of Theorem \ref{theorem_main}) that
\begin{equation}\label{U<mu}
    U_{i,j}\leq \mu_{i,j}, \quad \forall i\in\overline{1, N_{1}},\; j\in\overline{1,N_{2}},
\end{equation}
where the real numbers $\mu_{i,j}$ are defined by formulas \eqref{lem_1_eq_1} with
\begin{equation}\label{a_b_c_for_U}
    b=h_{1}b_{1}(h_{2})=h_{1}M_{R}\left(h_{2}M^{\prime}_{N}M_{u}+Y\left(M_{N}+M_{N}^{\prime}M_{u}\right)\right),
\end{equation}
$$a=1+h_{1}h_{2}M^{\prime}_{R},\; c=h_{1}M_{R}(h_{2}+Y).$$
Inequalities \eqref{U<mu} together with Lemma \ref{lema_mu} yields us the estimates
\begin{equation}\label{U_estim}
    U_{i,j}\leq \mu_{i,j}\leq M_{R}X(h_{2}+Y)\exp\Bigl((X+Y)b_{1}(h_{2})+h_{2}XM^{\prime}_{R}\Bigr)=
\end{equation}
$$=E(h_{2})\leq E(1)=\sigma_{1}, \quad \forall i\in\overline{1,N_{1}},\;\forall j\in\overline{1,N_{2}}.$$
Returning to the estimation of $\stackrel{(k)}{u}\!\!\!(x,y)$ we get
\begin{equation}\label{sigma}
    \bigl\|\stackrel{(k)}{u}\bigr\|\stackrel{def}{=}\bigl\|\stackrel{(k)}{u}\!\!\!(x,y)\bigr\|_{\bar{D}}=\max\limits_{\substack{i\in\overline{1,N_{1}}\\ j\in\overline{1,N_{2}}}}\bigl\|\stackrel{(k)}{u}\bigr\|_{i,j}\leq \sigma_{1}\bigl\|\stackrel{(k)}{F}\bigr\|.
\end{equation}
Using estimate \eqref{sigma} and equation \eqref{corrections_equation_modif} it is not hard to obtain the inequalities
\begin{equation}\label{estimations_for_u_derivatives}
   \Bigl\|\frac{\partial \stackrel{(k)}{u}(x,y)}{\partial x}\Bigr\|_{\bar{D}}\leq Y\sigma_{2}\bigl\|\stackrel{(k)}{F}\bigr\|,\;\;\Bigl\|\frac{\partial \stackrel{(k)}{u}(x,y)}{\partial y}\Bigr\|_{\bar{D}}\leq X\sigma_{2}\bigl\|\stackrel{(k)}{F}\bigr\|,
\end{equation}
where
$$\sigma_{2}=\sigma_{1}(M_{N}+M^{\prime}_{N}M_{u})+1.$$

Combining inequalities \eqref{sigma} and \eqref{estimations_for_u_derivatives} we get the following astimate
\begin{equation}\label{central_inequality}
    \bigl\|\stackrel{(k)}{u}\bigr\|_{1}=\bigl\|\stackrel{(k)}{u}(x,y)\bigr\|_{1, \bar{D}}\leq\sigma\bigl\|\stackrel{(k)}{F}\bigr\|,\quad \sigma=\max\left\{\sigma_{1}, \sigma_{2}\sqrt{X^{2}+Y^{2}}\right\}.
\end{equation}

Recalling the explicit formula for $\stackrel{(k)}{F}\!\!(x,y)$ (see \eqref{corrections_equation_modif}) we can proceed with the estimation of $\bigl\|\stackrel{(k)}{u}\bigr\|_{1}$ as follows
\begin{equation}\label{F_inside}
    \bigl\|\stackrel{(k)}{u}\bigr\|_{1}\leq \sigma\Bigl(\sum\limits_{s=1}^{k-1}A_{k-s}\bigl(\tilde{N}; \bigl\|\stackrel{(0)}{u}\bigr\|,\ldots, \bigl\|\stackrel{(k-s)}{u}\bigr\| \bigr)\bigl\|\stackrel{(s)}{u}\bigr\|+\Bigr.
\end{equation}
$$+\Bigl\|\sum\limits_{s=0}^{k-1}\Bigl[A_{k-s-1}\bigl(N; \stackrel{(0)}{u_{\bot}}\!\!(x,y),\ldots,\stackrel{(k-s-1)}{u_{\bot}}\!\!(x,y) \bigr)-\Bigr.\Bigr.$$
$$\Bigl.\Bigl.-A_{k-s-1}\bigl(N; \stackrel{(0)}{u}\!\!(x,y),\ldots,\stackrel{(k-s-1)}{u}\!\!(x,y) \bigr)\Bigr]\stackrel{(s)}{u}\!\!(x,y)\Bigr\|_{\bar{D}}+$$
$$\Bigl.+A_{k}\bigl(\tilde{N}; \bigl\|\stackrel{(0)}{u}\bigr\|,\ldots,\bigl\|\stackrel{(k)}{u}\bigr\|\bigr)\bigl\|\stackrel{(0)}{u}\bigr\|-\bigl\|\stackrel{(0)}{u}\bigr\|\bigl\|\stackrel{(k)}{u}\bigr\|\tilde{N}^{\prime}\bigl(\bigl\|\stackrel{(0)}{u}\bigr\|\bigl)\Bigr),$$
where $$\tilde{N}(u)=\sum\limits_{s=0}^{\infty}|\nu_{s}|u^{s}.$$
To estimate the second term in the right side of inequality \eqref{F_inside} we need the lemma stated below.

\begin{lemma}\label{arxiv_lema}
  $$ \Bigl\|A_{s}\bigl(N; \stackrel{(0)}{u_{\bot}}\!\!\!(x,y),\ldots,\stackrel{(s)}{u_{\bot}}\!\!\!(x,y) \bigr)-A_{s}\bigl(N; \stackrel{(0)}{u}\!\!\!(x,y),\ldots,\stackrel{(s)}{u}\!\!\!(x,y) \bigr)\Bigr\|_{\bar{D}}\leq$$
  $$\leq hA_{s}\bigl(\tilde{N}_{1}; \bigl\|\stackrel{(0)}{u}\bigr\|_{1},\ldots,\bigl\|\stackrel{(s)}{u}\bigr\|_{1}\bigr),\quad \tilde{N}_{1}(u)=\tilde{N}^{\prime}(u)u.$$
\end{lemma}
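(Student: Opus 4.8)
The plan is to reduce the estimate to two elementary algebraic identities for Adomian polynomials combined with a single application of the mean value theorem. The starting observation is that $\stackrel{(k)}{u_{\bot}}(x,y)$ is, by its very definition, the value of $\stackrel{(k)}{u}$ at the lower-left vertex of the cell $P_{i,j}$ that contains $(x,y)$. Hence, repeating the one-dimensional mean value estimate used in \eqref{Gur_pr_eq_8}--\eqref{Gur_pr_eq_11} together with the Cauchy--Schwarz inequality $h_{1}\xi_{1}+h_{2}\xi_{2}\le h\sqrt{\xi_{1}^{2}+\xi_{2}^{2}}$, one gets, for every $(x,y)\in\bar{D}$,
$$\bigl|\stackrel{(k)}{u_{\bot}}(x,y)-\stackrel{(k)}{u}(x,y)\bigr|\le h\bigl\|\stackrel{(k)}{u}\bigr\|_{1,\bar{D}},\qquad \bigl|\stackrel{(k)}{u_{\bot}}(x,y)\bigr|\le\bigl\|\stackrel{(k)}{u}\bigr\|_{1,\bar{D}}.$$
Thus the task becomes: control the increment of the (smooth, by analyticity of $N$) map $(v_{0},\dots,v_{s})\mapsto A_{s}(N;v_{0},\dots,v_{s})$ when its $k$-th argument is perturbed by at most $h\|\stackrel{(k)}{u}\|_{1,\bar{D}}$, all arguments staying in the fixed compact interval on which the series for $N$ and its derivatives converge.

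Next I would apply the scalar mean value theorem to $A_{s}(N;\cdot)$ along the segment joining $\bigl(\stackrel{(0)}{u_{\bot}}(x,y),\dots,\stackrel{(s)}{u_{\bot}}(x,y)\bigr)$ and $\bigl(\stackrel{(0)}{u}(x,y),\dots,\stackrel{(s)}{u}(x,y)\bigr)$, using the identity
$$\frac{\partial}{\partial v_{j}}A_{n}(N;v_{0},\dots,v_{n})=A_{n-j}(N^{\prime};v_{0},\dots,v_{n-j}),$$
which follows directly from the generating-function form in \eqref{Cherault_formula}: differentiate $N\bigl(\sum_{k}v_{k}\tau^{k}\bigr)$ in $v_{j}$ to produce $N^{\prime}(\cdot)\tau^{j}$ and apply Leibniz's rule under the $n$-th $\tau$-derivative at $\tau=0$. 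Combined with the first step this yields, pointwise on $\bar{D}$,
$$\Bigl|A_{s}\bigl(N;\stackrel{(0)}{u_{\bot}},\dots,\stackrel{(s)}{u_{\bot}}\bigr)-A_{s}\bigl(N;\stackrel{(0)}{u},\dots,\stackrel{(s)}{u}\bigr)\Bigr|\le h\sum_{j=0}^{s}\bigl|A_{s-j}(N^{\prime};v_{0}^{*},\dots,v_{s-j}^{*})\bigr|\,\bigl\|\stackrel{(j)}{u}\bigr\|_{1,\bar{D}},$$
where each $v_{k}^{*}$ lies between $\stackrel{(k)}{u_{\bot}}(x,y)$ and $\stackrel{(k)}{u}(x,y)$, so that $|v_{k}^{*}|\le\|\stackrel{(k)}{u}\|_{1,\bar{D}}$.

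To finish I would dominate the sum. Reading off \eqref{Cherault_formula} termwise and using $|N^{(\alpha)}(w)|\le\tilde{N}^{(\alpha)}(|w|)$ gives the majorization $|A_{m}(N^{\prime};w_{0},\dots,w_{m})|\le A_{m}(\tilde{N}^{\prime};|w_{0}|,\dots,|w_{m}|)$, while $A_{m}(\tilde{N}^{\prime};\cdot)$ is nondecreasing in each of its nonnegative arguments; since $|v_{k}^{*}|\le\|\stackrel{(k)}{u}\|_{1}$, the last inequality becomes
$$\Bigl|A_{s}\bigl(N;\stackrel{(0)}{u_{\bot}},\dots,\stackrel{(s)}{u_{\bot}}\bigr)-A_{s}\bigl(N;\stackrel{(0)}{u},\dots,\stackrel{(s)}{u}\bigr)\Bigr|\le h\sum_{j=0}^{s}A_{s-j}\bigl(\tilde{N}^{\prime};\bigl\|\stackrel{(0)}{u}\bigr\|_{1},\dots,\bigl\|\stackrel{(s-j)}{u}\bigr\|_{1}\bigr)\bigl\|\stackrel{(j)}{u}\bigr\|_{1}.$$
The remaining sum is identified by the Cauchy product of power series: since $\tilde{N}_{1}(u)=\tilde{N}^{\prime}(u)u$ and the $\tau^{j}$-coefficient of $\sum_{k}w_{k}\tau^{k}$ equals $w_{j}$, one has $A_{s}(\tilde{N}_{1};w_{0},\dots,w_{s})=\sum_{j=0}^{s}A_{s-j}(\tilde{N}^{\prime};w_{0},\dots,w_{s-j})\,w_{j}$. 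Substituting $w_{k}=\|\stackrel{(k)}{u}\|_{1}$ and taking the supremum over $\bar{D}$ gives exactly the assertion of the lemma.

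I expect the only genuinely non-routine points to be the two Adomian-polynomial identities just used — the derivative rule $\partial_{v_{j}}A_{n}=A_{n-j}(N^{\prime};\cdot)$ and the multiplicative (Cauchy-product) rule $A_{s}(\tilde{N}^{\prime}\!\cdot\!\mathrm{id};\cdot)=\sum_{j}A_{s-j}(\tilde{N}^{\prime};\cdot)w_{j}$ — together with the bookkeeping that the coefficient-wise modulus of $N^{\prime}$ is $\tilde{N}^{\prime}$ and that all intermediate points $v_{k}^{*}$ stay in the range of convergence of $N$'s series; once these are in place the inequality drops out by the term-by-term comparison in \eqref{Cherault_formula}.
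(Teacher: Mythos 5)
Your argument is correct, and it is worth noting that the paper itself does not prove this lemma at all --- it simply states that it is a special case of Lemma 1 of \cite{Dragunov}. Your proposal therefore supplies a self-contained proof of what the paper delegates to a reference. The three ingredients you isolate all check out: (i) the cell-wise mean value estimate $\bigl|\stackrel{(k)}{u_{\bot}}\!\!(x,y)-\stackrel{(k)}{u}\!\!(x,y)\bigr|\leq h_{1}\bigl\|\partial_{x}\!\stackrel{(k)}{u}\bigr\|_{P_{i,j}}+h_{2}\bigl\|\partial_{y}\!\stackrel{(k)}{u}\bigr\|_{P_{i,j}}\leq h\bigl\|\stackrel{(k)}{u}\bigr\|_{1,\bar{D}}$ is exactly what the definition of $\|\cdot\|_{1,\bar{D}}$ (with its Euclidean combination of the two partial derivatives over each cell) is designed to deliver; (ii) the derivative rule $\partial_{v_{j}}A_{n}(N;v_{0},\dots,v_{n})=A_{n-j}(N^{\prime};v_{0},\dots,v_{n-j})$ follows from Leibniz's rule applied to $N^{\prime}\bigl(\sum_{k}v_{k}\tau^{k}\bigr)\tau^{j}$ as you say, and the Cauchy-product identity $A_{s}(\tilde{N}_{1};w_{0},\dots,w_{s})=\sum_{j=0}^{s}A_{s-j}(\tilde{N}^{\prime};w_{0},\dots,w_{s-j})w_{j}$ is immediate from $\tilde{N}_{1}(u)=\tilde{N}^{\prime}(u)u$ and the product rule for Taylor coefficients; (iii) the termwise majorization $|A_{m}(N^{\prime};w_{0},\dots,w_{m})|\leq A_{m}(\tilde{N}^{\prime};|w_{0}|,\dots,|w_{m}|)$ and the monotonicity of $A_{m}(\tilde{N}^{\prime};\cdot)$ in nonnegative arguments are visible directly in \eqref{Cherault_formula}, since $\tilde{N}$ is the coefficient-wise modulus of $N$ and the series is assumed to converge on all of $\mathbf{R}$, so no domain issues arise. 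The one point you should make explicit rather than leave implicit is that the intermediate point of the mean value theorem satisfies $|v_{k}^{*}|\leq\max\bigl\{\bigl|\stackrel{(k)}{u_{\bot}}\!\!(x,y)\bigr|,\bigl|\stackrel{(k)}{u}\!\!(x,y)\bigr|\bigr\}\leq\bigl\|\stackrel{(k)}{u}\bigr\|_{\bar{D}}\leq\bigl\|\stackrel{(k)}{u}\bigr\|_{1,\bar{D}}$, which is what lets you replace $v_{k}^{*}$ by $\bigl\|\stackrel{(k)}{u}\bigr\|_{1}$ inside the monotone majorant; with that noted, the inequality assembles exactly as claimed.
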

Lemma \ref{arxiv_lema} is a partial case of Lemma 1 from \cite{Dragunov}.

Using Lemma \ref{arxiv_lema} we can estimate the right side of inequality \eqref{F_inside} in the following way:
$$    \bigl\|\stackrel{(k)}{u}\bigr\|_{1}\leq \sigma\Bigl(\sum\limits_{s=0}^{k-1}A_{k-s}\bigl(\tilde{N}; \bigl\|\stackrel{(0)}{u}\bigr\|_{1},\ldots, \bigl\|\stackrel{(k-s)}{u}\bigr\|_{1} \bigr)\bigl\|\stackrel{(s)}{u}\bigr\|_{1}+\Bigr.
$$
\begin{equation}\label{estimation_after_lemma}
    +h\sum\limits_{s=0}^{k-1}A_{k-s-1}\bigl(\tilde{N}_{1}; \bigl\|\stackrel{(0)}{u}\bigr\|_{1},\ldots, \bigl\|\stackrel{(k-s-1)}{u}\bigr\|_{1}\bigr)\bigl\|\stackrel{(s)}{u}\bigr\|_{1}-\bigl\|\stackrel{(0)}{u}\bigr\|_{1}\bigl\|\stackrel{(k)}{u}\bigr\|_{1}\tilde{N}^{\prime}\bigl(\bigl\|\stackrel{(0)}{u}\bigr\|_{1}\bigl)\Bigr).
\end{equation}

Let us consider the sequence of real numbers $\{v_{k}\}_{k=0}^{\infty}$ defined by the formulas
$$ v_{0}=\bigl\|\stackrel{(0)}{u}\bigr\|_{1},\;\;   v_{k}= \sigma\Bigl(\sum\limits_{s=0}^{k-1}A_{k-s}\bigl(\tilde{N}; v_{0},\ldots, v_{k-s}\bigr)v_{s}+\Bigr.
$$
\begin{equation}\label{dominant_sequence}
    +\sum\limits_{s=0}^{k-1}A_{k-s-1}\bigl(\tilde{N}_{1}; v_{0},\ldots, v_{k-s-1}\bigr)v_{s}-v_{k}\tilde{N}^{\prime}\bigl(v_{0}\bigl)v_{0}\Bigl),\quad k=1,2,\ldots.
\end{equation}
It is easy to see that for the sequence $\{v_{k}\}_{k=0}^{\infty}$ defined above the inequalities
\begin{equation}\label{dominating_inequality}
   \bigl\|\stackrel{(k)}{u}\bigr\|_{1}\leq v_{k}h^{k},\quad  k=0,1,\ldots
\end{equation}
hold true.
Assuming that the series
\begin{equation}\label{gen_funct}
   g(z)=\sum\limits_{k=0}^{\infty}v_{k}z^{k}
\end{equation}
has a nonzero convergence radius, say $R>0$, and $g(R)<\infty$, we immediately arrive at the inequality
\begin{equation}\label{ineq_for_gen_func}
    v_{k}R^{k}\leq \frac{c}{k^{1+\varepsilon}}
\end{equation}
with some positive parameters $c$ and $\varepsilon.$ From inequality \eqref{ineq_for_gen_func} it follows that the condition $h\leq R$ is sufficient for the series  $\sum\limits_{k=0}^{\infty}\bigl\|\stackrel{(k)}{u}\bigr\|_{1}$ to converge, that is, for the FD-method to converge. Thus, to prove that inequality \eqref{converg_inequality} holds for a parameter $h,$ chosen sufficiently small, we have to investigate convergence of power series \eqref{gen_funct}.

Taking into account equalities \eqref{dominant_sequence} we arrive at the conclusion that function $g(z)$ satisfies the nonlinear functional equation
\begin{equation}\label{equation_generating_function}
    \bigl(g(z)-v_{0}\bigr)\Bigl(1+\tilde{N}^{\prime}\bigl(v_{0}\bigl)v_{0}\Bigr)=\sigma\Bigl[\Bigl(\tilde{N}\bigl(g(z)\bigr)-\tilde{N}\bigl(v_{0}\bigr)\Bigr)g(z)+z\tilde{N}^{\prime}\bigl(g(z)\bigr)\bigl(g(z)\bigr)^{2}\Bigr]
\end{equation}
for all $z\in (-R, R).$
To prove that the radius of convergence of power series \eqref{gen_funct} is nonzero, i.e, $R>0,$ we have to consider the inverse function $z=g^{-1}.$  From equation \eqref{equation_generating_function} we can easily derive the explicit formula for $z=z(g):$
\begin{equation}\label{z_function}
    z(g)=\frac{\bigl(g-v_{0}\bigr)\Bigl(1+\tilde{N}^{\prime}\bigl(v_{0}\bigl)v_{0}\Bigr)-\Bigl(\tilde{N}\bigl(g\bigr)-\tilde{N}\bigl(v_{0}\bigr)\Bigr)g\sigma}{\sigma\tilde{N}^{\prime}\bigl(g\bigr)g^{2}}.
\end{equation}
 Taking into account that $z(v_{0})=0$ we can easily find the value of $z^{\prime}(v_{0}):$
\begin{equation}\label{z_prime}
   z^{\prime}(v_{0})=\lim\limits_{g\rightarrow v_{0}}\frac{z(g)-z(v_{0})}{g-v_{0}}=\frac{1}{\sigma\tilde{N}^{\prime}\bigl(v_{0}\bigr)v_{0}^{2}}.
\end{equation}
Since function $z(g)$ \eqref{z_function} is holomorphic in some open neighborhood of the point $g=v_{0}$ and $z^{\prime}(v_{0})>0,$ we can conclude that there exists an inverse function $z^{-1}=g$ which is holomorphic in some open interval $(-R, R)$ (see \cite{Analytic_theory}). Supposing that $g(R)=\infty$ we get the contradiction (see \eqref{equation_generating_function}):
\begin{equation}\label{gen_func_contradiction}
    1+\tilde{N}^{\prime}(v_0)v_0=\lim\limits_{z\rightarrow +\infty}\Biggl(\frac{v_0(1+\tilde{N}^{\prime}(v_0)v_0)}{g(z)}+\sigma\Bigl[\Bigl(\tilde{N}\bigl(g(z)\bigr)-\tilde{N}\bigl(v_{0}\bigr)\Bigr)+z\tilde{N}^{\prime}\bigl(g(z)\bigr)g(z)\Bigr]\Biggr)=+\infty.
\end{equation}
This contradiction proves inequality \eqref{ineq_for_gen_func} for some positive constants $c$ and $\varepsilon$ which depend on $\tilde{N}(u)$ and $v_{0}$ only. Thus, we have that condition  $h\leq R$ provides the validity of inequalities \eqref{converg_inequality} and
\begin{equation}\label{u_norm_ineq}
    \bigl\|\stackrel{(k)}{u}\bigr\|_{1}\leq \frac{c}{k^{1+\varepsilon}}\left(\frac{h}{R}\right)^{k}.
\end{equation}

Assume that $h\leq R.$ Then inequality \eqref{converg_inequality} allows us to consider the function $$\stackrel{\infty}{u}\!\!(x,y)=\sum\limits_{k=0}^{\infty}\stackrel{(k)}{u}\!\!(x,y)\in C(\bar{D}).$$
Furthermore, from equation \eqref{corrections_equation} it follows that $\stackrel{(k)}{u}\!\!(x,y)\in C^{1,1}(P_{i,j})$ and
\begin{equation}\label{ineq_for_part_part}
    \Bigl\|\frac{\partial^{2} \stackrel{(k)}{u}\!\!(x,y)}{\partial x\partial y}\Bigr\|_{P_{i,j}}\leq \bigl(M_{N}+M_{N}^{\prime}M_{u}\bigr)\bigl\|\stackrel{(k)}{u}\bigr\|_{1}+\bigl\|\stackrel{(k)}{F}\bigr\|,\quad
    i\in \overline{1, N_{1}}, j\in\overline{1,N_{2}}.
\end{equation}
Inequality \eqref{ineq_for_part_part} together with \eqref{u_norm_ineq} imply that $\stackrel{\infty}{u}\!\!(x,y)\in C^{1,1}(P_{i,j})$  and
 $$\frac{\partial^{2} \stackrel{\infty}{u}\!\!(x,y)}{\partial x\partial y}=\sum\limits_{k=0}^{\infty}\frac{\partial^{2} \stackrel{(k)}{u}\!\!(x,y)}{\partial x\partial y}\quad \forall (x,y)\in P_{i,j},\quad i\in \overline{1, N_{1}}, j\in\overline{1,N_{2}}.$$
 The latter fact allows us to sum up equations \eqref{lem_2_eq_1} and \eqref{corrections_equation} over $k$ from $1$ to $\infty$ and this, taking into account the obvious equality
$$\mathbb{N}\bigl(\stackrel{\infty}{u}\!\!(x,y)\bigr)=N\bigl(\stackrel{\infty}{u}\!\!(x,y)\bigr)\stackrel{\infty}{u}\!\!(x,y)=\sum\limits_{k=0}^{\infty}\sum\limits_{s=0}^{k}A_{k-s}\bigl(N; \stackrel{(0)}{u}\!\!(x,y),\ldots, \stackrel{(k-s)}{u}\!\!(x,y)\bigr)\stackrel{(s)}{u}\!\!(x,y),$$
results in the equality
\begin{equation}\label{after_equation}
    \frac{\partial^{2} \stackrel{\infty}{u}\!\!(x,y)}{\partial x\partial y}+\mathbb{N}(\stackrel{\infty}{u}\!\!(x,y))=f(x,y), \end{equation}
    $$    \forall (x,y)\in \bar{D}\cap \left\{(x,y)\;| x\neq x_{i}, y\neq y_{j},\; i\in \overline{0, N_{1}}, j\in\overline{0,N_{2}}\right\}.$$
Hence, we see that equality \eqref{after_equation} formally coincides with equation \eqref{Gur_pr_eq_1}. To obtain the identity
$$\stackrel{\infty}{u}\!\!(x,y)\equiv u(x,y),\quad (x,y)\in \bar{D}$$
it is enough to remind that $\stackrel{\infty}{u}\!\!(0,y)\equiv u(0,y)\equiv\phi(y),$ $\stackrel{\infty}{u}\!\!(x,0)\equiv u(x,0)\equiv\psi(y)$ and to mention the fact that the solution $u(x,y)$ to problem \eqref{Gur_pr_eq_1}, \eqref{Gur_pr_eq_2} is unique on $D.$

Thereby, we have proved the following theorem.

\begin{theorem}\label{my_theorem}
  Suppose that the Goursat problem \eqref{Gur_pr_eq_1}, \eqref{Gur_pr_eq_2} satisfies the following conditions:
  \begin{enumerate}
  \item $\mathbb{N}(u)=u\sum\limits_{k=0}^{\infty}\nu_{k}u^{k},$ $\nu_{k}\in \mathbf{R},$ $\forall u\in \mathbf{R};$
  \item $\psi(x)\in C^{(1)}\left(D_{1}\right)\cap C\left(\bar{D}_{1}\right), \phi(y)\in C^{(1)}\left(D_{2}\right)\cap C\left(\bar{D}_{2}\right), \quad f(x,y)\in C(\bar{D}).$
  \end{enumerate}
  Then the FD-method described by formulas \eqref{FD-approx}, \eqref{mesh}, \eqref{lem_2_eq_1}, \eqref{lem_2_eq_2}, \eqref{corrections_equation}, \eqref{corrections_conditions} converges superexponentially to the exact solution $u(x,y)$ of the problem, i.e., the inequalities
    \begin{equation}\label{error_ineq}
    \bigl\|u(x,y)-\stackrel{(m)}{u}\!\!(x,y)\bigr\|_{1, \bar{D}}\leq \frac{c R}{(m+1)^{1+\varepsilon}(R-h)}\left(\frac{h}{R}\right)^{m+1},\quad m\in \mathbf{N}\cup \{0\}
  \end{equation}
  holds true, provided that $h<R,$ where positive real constants $c, R, \varepsilon$ depend on the input data of  problem \eqref{Gur_pr_eq_1}, \eqref{Gur_pr_eq_2} only.
\end{theorem}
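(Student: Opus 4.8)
The plan is simply to assemble, in the right order, the estimates already derived in this section into the single error bound \eqref{error_ineq}. First I would record the chain of mesh-independent a priori bounds for the correction terms: Theorem \ref{theorem_main} yields the uniform boundedness \eqref{bound for u^{(0)}} of $\stackrel{(0)}{u}\!\!(x,y)$, which produces the constants $M_{N},M'_{N},M_{R},M'_{R}$ of \eqref{other_bounds}; substituting these into the Riemann representation \eqref{Repr_Riemann_func_1} and the derivative estimate \eqref{estim_for_derivative} gives the recurrence \eqref{u_estimation_1} for the normalized quantity $\stackrel{(k)}{U}_{i,j}=\bigl\|\stackrel{(k)}{u}\bigr\|_{i,j}\bigl\|\stackrel{(k)}{F}\bigr\|^{-1}$, which is dominated termwise by the model sequence $\mu_{i,j}$ of Lemma \ref{lema_mu}. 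This delivers the central inequality \eqref{central_inequality}, $\bigl\|\stackrel{(k)}{u}\bigr\|_{1}\le\sigma\bigl\|\stackrel{(k)}{F}\bigr\|$, with $\sigma$ independent of $h_{1},h_{2}$.

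Next I would close the recursion by bounding $\bigl\|\stackrel{(k)}{F}\bigr\|$ through the modified equation \eqref{corrections_equation_modif} in terms of the lower-order corrections. Passing to the majorant $\tilde{N}(u)=\sum_{s=0}^{\infty}|\nu_{s}|u^{s}$ and invoking Lemma \ref{arxiv_lema} to control the difference of Adomian polynomials evaluated at $\stackrel{(s)}{u_{\bot}}$ and $\stackrel{(s)}{u}$ --- this is exactly where the gained factor $h$ enters --- leads to \eqref{estimation_after_lemma}, hence to the scalar dominating recurrence \eqref{dominant_sequence} for a sequence $\{v_{k}\}$ satisfying $\bigl\|\stackrel{(k)}{u}\bigr\|_{1}\le v_{k}h^{k}$ (inequality \eqref{dominating_inequality}).

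The core of the argument, and the step I expect to be the main obstacle, is to show that the generating function $g(z)=\sum_{k=0}^{\infty}v_{k}z^{k}$ of \eqref{gen_funct} has a strictly positive radius of convergence $R$ and stays finite at $z=R$. Summing \eqref{dominant_sequence} shows $g$ solves the functional equation \eqref{equation_generating_function}; solving this for $z$ gives the explicit formula \eqref{z_function}, holomorphic near $g=v_{0}$ with $z(v_{0})=0$ and, by \eqref{z_prime}, $z'(v_{0})=\bigl(\sigma\tilde{N}'(v_{0})v_{0}^{2}\bigr)^{-1}>0$. The holomorphic inverse function theorem then furnishes a holomorphic inverse $g=z^{-1}$ on some interval $(-R,R)$ with $R>0$, and the limit computation \eqref{gen_func_contradiction} forbids $g(R)=\infty$, so $g(R)<\infty$. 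Convergence of $\sum v_{k}R^{k}$ then forces $v_{k}R^{k}\le c\,k^{-(1+\varepsilon)}$ (inequality \eqref{ineq_for_gen_func}) for some $c,\varepsilon>0$ depending only on $\tilde{N}$ and $v_{0}$, whence, by \eqref{dominating_inequality}, $\bigl\|\stackrel{(k)}{u}\bigr\|_{1}\le c\,k^{-(1+\varepsilon)}(h/R)^{k}$ whenever $h\le R$, which is \eqref{u_norm_ineq} and in particular yields \eqref{converg_inequality}.

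Finally I would sum the tail. For $h<R$, using the identity $\stackrel{\infty}{u}\!\!(x,y)\equiv u(x,y)$ already established by summing \eqref{lem_2_eq_1}, \eqref{corrections_equation} over $k$ and invoking uniqueness of the Goursat solution, one gets $\bigl\|u-\stackrel{(m)}{u}\bigr\|_{1,\bar{D}}=\bigl\|\sum_{k=m+1}^{\infty}\stackrel{(k)}{u}\bigr\|_{1,\bar{D}}\le\sum_{k=m+1}^{\infty}c\,k^{-(1+\varepsilon)}(h/R)^{k}\le\frac{c}{(m+1)^{1+\varepsilon}}\sum_{k=m+1}^{\infty}(h/R)^{k}=\frac{cR}{(m+1)^{1+\varepsilon}(R-h)}(h/R)^{m+1}$, which is precisely \eqref{error_ineq}; the superexponential rate is then immediate since $R$ is fixed while $h\to0$ under mesh refinement. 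The only genuinely delicate points are the positivity of $R$ (handled by the inverse function theorem applied to \eqref{z_function}) and the finiteness of $g$ at the endpoint (handled by the contradiction \eqref{gen_func_contradiction}); everything else is bookkeeping of majorant recurrences already prepared via Lemma \ref{lema_mu}.
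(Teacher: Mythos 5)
Your proposal is correct and follows essentially the same route as the paper's own argument: the majorant recurrence via Lemma \ref{lema_mu} leading to \eqref{central_inequality}, the Adomian-polynomial estimates with Lemma \ref{arxiv_lema} giving the dominating sequence \eqref{dominant_sequence}, the generating-function analysis of \eqref{equation_generating_function} via the holomorphic inverse of \eqref{z_function} to obtain $R>0$ and \eqref{ineq_for_gen_func}, and the final tail summation yielding \eqref{error_ineq}. No gaps to report.
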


\section{Numerical example}\label{Sect_num_examp}

Let us consider the Goursat problem
 $$
  \frac{\partial^2 u(x,y)}{\partial x\partial y}=e^{2u(x,y)},\quad (x,y)\in D,
 $$
  \begin{equation}\label{pr1}
  u(x,0)=\frac{x}{2}-\ln(1+e^{x}),\quad  u(0,y)=\frac{y}{2}-\ln(1+e^{y}),
  \end{equation}
where $D=\bigl\{(x,y)\mid 0<x<4, 0<y<4\bigr\}.$
Obviously, this problem satisfies the conditions of Theorem \ref{my_theorem}.
It is not hard to verify that the exact solution to problem \eqref{pr1} is $$u^{*}(x,y)=\frac{x+y}{2}-\ln(e^{x}+e^{y}).$$

Using the FD-method described above we approximate the exact solution to problem \eqref{pr1} by a finite sum  \eqref{FD-approx} with the terms $\stackrel{(k)}{u}\!\!\!(x,y)$ satisfying the following recurrence system of linear Goursat problems:
$$
  \frac{\partial^2 \stackrel{(0)}{u}\!\!\!(x,y)}{\partial x\partial y}+\frac{1-\exp({2\stackrel{(0)}{u}\!\!\!(x_{i-1},y_{j-1})})}{\stackrel{(0)}{u}\!\!\!(x_{i-1},y_{j-1})}\stackrel{(0)}{u}\!\!\!(x,y)=1,
$$
$$
  \stackrel{(0)}{u}\!\!\!(x_{i-1}+0,y)=\stackrel{(0)}{u}\!\!\!(x_{i-1}-0,y),\,\stackrel{(0)}{u}\!\!\!(x,y_{j-1}+0)=\stackrel{(0)}{u}\!\!\!(x,y_{j-1}-0),$$
$$\stackrel{(0)}{u}(x,0)=\frac{x}{2}-\ln(1+e^{x}),\,\, \stackrel{(0)}{u}(0,y)=\frac{y}{2}-\ln(1+e^{y}),
$$
$$
  \frac{\partial^2 \stackrel{(k)}{u}\!\!\!(x,y)}{\partial x\partial y}+\frac{1-\exp({2\stackrel{(0)}{u}(x_{i-1},y_{j-1})})}{\stackrel{(0)}{u}\!\!\!(x_{i-1},y_{j-1})}\stackrel{(k)}{u}\!\!\!(x,y)=$$
$$
  =\Biggl(\frac{1-\exp({2\stackrel{(0)}{u}\!\!\!(x_{i-1},y_{j-1})})}{\bigl(\stackrel{(0)}{u}\!\!\!(x_{i-1},y_{j-1})\bigr)^{2}}+\frac{2\exp({2\stackrel{(0)}{u}\!\!\!(x_{i-1},y_{j-1}))}}{\stackrel{(0)}{u}\!\!\!(x_{i-1},y_{j-1})}\Biggr)\stackrel{(k)}{u}\!\!\!(x_{i-1},\,\,y_{j-1})\stackrel{(0)}{u}\!\!\!(x,y)+
$$
$$
  +\stackrel{(k)}{F}\!\!\!(x,y),\ x \in (x_{i-1},x_i),\,\,y \in (y_{j-1},y_j),
$$
$$
  \stackrel{(k)}{u}\!\!\!(x_{i-1}+0,y)=\stackrel{(k)}{u}\!\!\!(x_{i-1}-0,y),\stackrel{(k)}{u}\!\!\!(x,y_{j-1}+0)=\stackrel{(k)}{u}\!\!\!(x,y_{j-1}-0),\,\,i\in \overline{1,N_1}, \,\,\, j\in\overline{1,N_2},
$$
$$
  \stackrel{(k)}{u}\!\!\!(x,0)=0,\,\, \stackrel{(k)}{u}\!\!\!(0,y)=0,\ k=1,2,\ldots,
$$
where function $\stackrel{(k)}{F}\!\!\!(x,y)$ is defined by formula \eqref{corrections_equation_modif} and $x_{i},$ $y_{j}$ are defined according to \eqref{mesh} with $X=Y=4,N_{1}=N_{2}=4; 20; 40; 80$ (in terms of $h_{1}, h_{2}$ we have that $h_{1}=h_{2}=0.5; 0.2; 0.1; 0.05$ respectively).
To monitor the accuracy of the method we use the function
$$
\delta(h_{1}, h_{2},m)=\bigl\|\stackrel{(m)}{u}\!\!\!(x,y,h_{1}, h_{2})-u^{\ast}(x,y)\bigr\|_{\bar{D}}.
$$
\begin{center}
\noindent{Table 1. The error of the FD-method as a function of the rank ($m$) and the mesh size ($h$)}
 \begin{tabular}{|c|c|c|c|c|}
   \hline
         &      $\delta(0.5, 0.5, m)$       &       $\delta(0.2, 0.2, m)$       &      $\delta(0.1, 0.1, m)$ &           $\delta(0.05, 0.05, m)$  \\
   \hline
   $m=0$ & 1.0584498110834e-1   & 1.3629587830264e-2 & 4.3352923963359e-3    & 1.0590305089182e-3\\
   \hline
   $m=1$ & 2.0875237867244e-2   & 5.6023714534399e-3 & 2.0412391759766e-3    & 3.6571985266298e-4\\
   \hline
   $m=2$ & 1.5876742122176e-2   & 1.7852756996399e-3 & 3.1676334086428e-4    & 4.3855534642367e-5\\
   \hline
   $m=3$ & 8.7851563393853e-3   & 1.7609349110392e-4  & 2.0298330526525e-5   & 1.5101132648798e-6\\
   \hline
   $m=4$ & 2.0883887112112e-3   & 1.3084812991115e-5 & 1.1360343226132e-6    & 4.6417353405381e-8\\
   \hline
   $m=5$ & 2.9359063800745e-4   & 5.1600423756071e-7  & 5.6241341916952e-8   & 2.7419476073821e-9\\
   \hline
   $m=6$ & 1.7966735715635e-5   & 4.4543002859311e-9  & 4.2864933415766e-10  & 5.3844093415473e-11\\
   \hline
   $m=7$ & 1.1298645650193e-6   & 3.4087147338034e-10  & 6.4824133982673e-11 & 3.7704350835172e-12\\
   \hline
 \end{tabular}
 \end{center}
The values of function $\delta(h_{1}, h_{2},m)$  presented in Table 1 show that the convergence rate of the FD-method increases as the mesh size decreases. Using the numerical data presented in the table it is not hard to verify that the function $\delta(h_{1}, h_{2},m),$ as a function of $m,$ (i.e., for the fixed values of parameters $h_{1}, h_{2}$) decreases exponentially as the rank $m$ of the FD-method increases. This is in good agreement with Theorem \ref{my_theorem}.
\section{Conclusions}\label{Sect_Concl}
In the paper we have developed a numerical-analytic method for solving the Goursat problem for nonlinear Klein-Gordon equation. Under relatively general assumptions, we have proved that the method converges superexponentially, provided that the mesh size ($h$) is sufficiently small. From Theorem \ref{my_theorem} it follows that the accuracy of the FD-method can be increased either by increasing the rank $m$ of the method or by decreasing the mesh size $h$. The latter conclusion has been confirmed by the results of the numerical example included in the paper.

Though not discussed in the paper, the question of developing an efficient software implementation for the proposed method is of great interest from both theoretical and practical point of view. We leave this question to a subsequent paper.

\bibliographystyle{plain}

\bibliography{references_stat}

\end{document}